\title{Riemannian Preconditioned Coordinate Descent\\ for low multi-linear rank approximation\thanks{Submitted to the editors \today.
\funding{The first author was partially supported by the Research Foundation Flanders (FWO) research project G081222N and UA BOF DocPRO4 projects with ID 46929 and 48996.}}}
\author{Mohammad Hamed\footnotemark[3] \thanks{Department of Mathematics, University of Antwerp, Antwerp, Belgium (\email{mohammad.hamed@uantwerp.be})}
\and Reshad Hosseini \thanks{School of ECE, College of Engineering, University of Tehran, Tehran, Iran (\email{mohamad.hamed@ut.ac.ir}, \email{reshad.hosseini@ut.ac.ir}).}}
\begin{document}

\maketitle

\begin{abstract}
  This paper presents a memory efficient, first-order method for low multi-linear rank approximation of high-order, high-dimensional tensors. 
  In our method, we exploit the second-order information of the cost function and the constraints to suggest a new Riemannian metric on the Grassmann manifold.  
  We use a Riemmanian coordinate descent method for solving the problem, and also provide a global convergence analysis matching that of the coordinate descent method in the Euclidean setting. We also show that each step of our method with the unit step-size is actually a step of the orthogonal iteration algorithm. 
  Experimental results show the computational advantage of our method for high-dimensional tensors.
\end{abstract}

\begin{keywords}
Tucker decomposition, Riemannian optimization, Preconditioning, Coordinate descent, Riemannian metric
\end{keywords}

\begin{AMS}
  15A69, 49M37, 53A45, 65F08
\end{AMS}

\section{Introduction}
Higher-order tensors are ubiquitous in factor analysis problems across multiple disciplines, including psychometrics, econometrics, biomedical signal processing, data mining, and social network analysis. 
In particular, tensor decomposition techniques with low-rank approximations offer several benefits, such as reducing the number of dimensions, removing noise, and uncovering latent variables. 
The utility of tensor decomposition has been demonstrated in a range of applications, from identifying underlying factors in psychometric studies to identifying sources of signals in biomedical research. 
To gain a comprehensive understanding of tensors and their decomposition methods, Kolda and Bader's review \cite{kolda2009tensor} provides a broad perspective, including mathematical foundations and algorithmic approaches, while Sidiropoulos et al.'s work \cite{sidiropoulos2017tensor} provides a more recent overview with a specific emphasis on signal and data analysis.

Tucker decomposition, first introduced by \cite{tucker1966some}, is a mathematical technique that involves factorizing a tensor $\mathcal{X} \in \mathbb{R}^{n_1 \times \cdots \times n_d}$ of multi-linear rank-$(r_1, ..., r_d)$ into a core tensor and $d$ factor matrices. 
This approach extends principal component analysis from matrices to tensors, as noted in \cite{zare2018extension}. 
So, in Tucker decomposition, the factor matrices can be viewed as the principal components for each mode of the tensor. 
The factorization of the tensor $\mathcal{X}$ can be expressed as follows:
\begin{equation*}
	\mathcal{X} = \mathcal{C}\times_1 U_1 \times_2 \cdots \times_d U_d \ ,
\end{equation*}
where $\mathcal{X}\times_i U_i$ is the $i$-mode product (see \cref{i-mode}) between $\mathcal{X}$ and $U_i$, $\mathcal{C} \in \mathbb{R}^{r_1\times\cdots\times r_d}$ and $U_i \in St(n_i,r_i)$ (see \cref{Stiefel}) denote the core tensor and each of the orthonormal factor matrices, respectively. 
Typically, $r_i \ll n_i$, which enables $\mathcal{C}$ to be considered as a compressed or dimensionally reduced version of the original tensor $\mathcal{X}$.

The storage complexity of Tucker decomposition is proportional to $O(\prod_{i=1}^d r_i + \sum_{i=1}^d n_ir_i)$, as opposed to $O(\prod_{i=1}^d n_i)$ for the original tensor $\mathcal{X}$. 
Once the factor matrices are found, the core tensor can be computed as
\begin{equation*}
	\mathcal{C} = \mathcal{X}\times_1 U_1^T \times_2 \cdots \times_d U_d^T \ .
\end{equation*}

The manifold $St(n_i,r_i)$, which consists of all $n_i \times r_i$ matrices with orthonormal columns, does not guarantee the uniqueness of the decomposition due to the symmetry inherent in the problem.
Any modified core tensor $\Bar{\mathcal{C}} = \mathcal{C}\times_1 Q_1^T \times_2 \cdots \times_d Q_d^T$ where $Q_i$s are orthogonal matrices, is another low-rank representation of the original tensor, i.e., $\mathcal{X} = \Bar{\mathcal{C}} \times_1 U_1 Q_1 \times_2 \cdots \times_d U_d Q_d$.
However, we can achieve uniqueness by constraining the matrices $U_i$ to lie on the Grassmann manifold, as we will demonstrate later in this paper. 
Therefore, we propose a Riemannian coordinate descent algorithm that operates on the product space of Grassmann manifolds to solve this problem.
In addition to the Grassmann manifold, other common constraints for the $U_i$ matrices include statistical independence, sparsity, and non-negativity
\cite{cichocki2009nonnegative,morup2008algorithms}.
These constraints provide prior information about the underlying factors and lead to more interpretable results.

In the manifold optimization literature \cite{absil2009optimization,boumal2020introduction}, it is a common practice to transform a constrained optimization problem in Euclidean space into an unconstrained problem on a manifold that represents the constraints. 
This approach offers several advantages over traditional constrained optimization methods.
One significant advantage of Riemannian optimization methods is that they ensures exact satisfaction of constraints at every iteration, whereas classical constrained optimization methods only approximately satisfy constraints. 
Additionally, manifold optimization respects the geometry of the problem, meaning that the definition of the inner product can provide more meaningful Riemannian gradient directions.

The approach of coordinate descent methods \cite{wright2015coordinate}, involves making partial updates to the decision variables. 
This method offers an advantage in terms of the ease of generating search directions and updating variables. This feature is particularly useful when working with large-scale problems. 
Additionally, coordinate descent methods tend to exhibit fast empirical convergence, particularly during the initial optimization steps. 
As such, they are a suitable option for approximations.

Gutman and Ho-Nguyen \cite{gutman2022coordinate} proposed an extension of the coordinate descent method that operates in the manifold domain. 
Instead of minimizing over coordinates, the authors performed inexact minimization over subspaces of the tangent space at every point.
They noted that the convergence rate in the case of product manifolds is comparable to that in the Euclidean setting. 
Drawing on this insight, we employed a coordinate descent approach to compute the factor matrices in Tucker decomposition.
Specifically, we solved an optimization problem for each factor matrix using a reformulated cost function subject to the Grassmann manifold constraint.

Gradient-based algorithms are widely used in solving large-scale problems, but sometimes they encounter convergence issues. 
To achieve better convergence rates, it is beneficial to find a suitable metric. 
Although the construction of a Riemannian metric typically focuses on the geometry of the constraints, considering the role of the cost function can also be helpful when possible. 
This approach was introduced in \cite{mishra2016riemannian}, where the second-order information of the Lagrangian was encoded into the metric. We adopt this method to develop a new metric that demonstrates excellent performance.
In addition, incorporating preconditioning into the coordinate descent algorithm in Euclidean space has also been explored in the literature (see \cite{tappenden2016inexact} for an example).

By combining all these factors, we introduce a new method, termed Riemannian Preconditioned Coordinate Descent (RPCD). The contributions of our paper are as follows:
\begin{itemize}
	\item RPCD is a first-order optimization-based algorithm which has advantages over SVD-based methods and second-order methods in large scale cases. It is also very efficient with respect to the memory complexity.
	\item We construct a Riemannian metric by using the second-order information of the cost function and constraint to solve the Tucker decomposition as a series of unconstrained problems on the Grassmann manifold.
	\item We provide a convergence analysis for the Riemannian coordinate descent algorithm in a relatively general setting. 
	This is done by modification of the convergence analysis in \cite{gutman2022coordinate} to the case of product manifolds when exponential map and parallel transport are replaced by retraction and vector transport. Our proposed RPCD algorithm for Tucker decomposition is a special case of Riemannian coordinate descent, and therefore the proofs hold for its global convergence. 
\end{itemize}
The results of our experiments, conducted on both synthetic and real data, demonstrate the superior performance of the proposed algorithm.
  
\subsection{Related work}
There are two algorithmic approaches to solving the Tucker decomposition problem. 
The first approach is based on Singular Value Decomposition (SVD) and aims to extend truncated SVD from matrices to tensors. 
This approach originated with the development of Higher-Order SVD (HOSVD) \cite{de2000multilinear}. 
The basic idea behind HOSVD is to identify a low-dimensional subspace within the column span of each unfolding of the tensor $\mathcal{X}$, denoted as $X_{(i)}$ for $ i=1,...,d$. 
Although HOSVD provides a sub-optimal solution, it is often used as an initialization for other methods when the computational cost is reasonable.

The authors who presented HOSVD proposed a method called Higher Order Orthogonal Iteration (HOOI) \cite{de2000best}. HOOI seeks orthonormal basis for the dominant subspace of each $Y_{(i)}$, which is the matricization of the tensor $\mathcal{Y}=\mathcal{X}\times_{-i} \{ U^T \}$ (see Definition \ref{TensorY}).
It is performed using a least-square approach while fixing other factor matrices.
By finding a low dimension subspace of $Y_{(i)}$ instead of $X_{(i)}$, HOOI provides a better low multi-linear rank$-(r_1, ... , r_d)$ approximation of $\mathcal{X}$ compared to HOSVD.
The Sequentially Truncated HOSVD (ST-HOSVD) is a variation of HOSVD that improves its efficiency. 
After finding each factor matrix, the tensor is projected using the obtained factor matrix, and the remaining operations are performed on the projected tensor. 
This method was introduced by Vannieuwenhoven et al. in 2012 \cite{vannieuwenhoven2012new}.
The HOOI method required careful initialization for efficient convergence, while the ST-HOSVD method performs better by choosing a suitable mode sequence - the order in which modes are processed.
Multi-linear Principal Component Analysis (MPCA) \cite{lu_plataniotis_venetsanopoulos_2008} is another method in this category that is similar to HOSVD but focuses on maximizing the variation in the projected tensor $\mathcal{C}$.
In the literature, various versions of HOSVD have been discussed, including hierarchical \cite{grasedyck2010hierarchical}, streaming \cite{sun2020low}, parallel \cite{austin2016parallel}, randomized \cite{che2019randomized}, and scalable \cite{oh2017s}. 
Recently, a fast and memory-efficient method called D-Tucker was introduced in \cite{jang2020d}.

The second approach to solving the Tucker decomposition problem involves using common second-order optimization algorithms. 
Eldén and Savas \cite{elden2009newton}, Savas and Lim \cite{savas2010quasi}, and Ishteva et al. \cite{ishteva2011best} have reformulated the original problem and apply Newton, quasi-Newton, and trust region methods, respectively, on the product of Grassmann manifolds. 
Although, utilizing the second-order information leads to algorithms with faster local convergence, these methods suffer from high computational complexity.

Tensor completion is a distinct problem from tensor decomposition, but noteworthy works include those of Kressner et al. \cite{kressner2014low} and Kasai et al. \cite{kasai2016low}. 
These studies are notable for using a first-order Riemannian method on a variant of tensor completion that employs Tucker decomposition.
In \cite{kressner2014low}, the Riemannian conjugate gradient method is applied to the manifold of tensors with fixed low multi-linear rank to solve the tensor completion problem. 
In \cite{kasai2016low}, the authors address the same problem by applying the same method as in \cite{kressner2014low} but on a product of Grassmann manifolds. 
The difference between our method and the latter is in the cost function and the optimization approach.

The structure of this paper is as follows: Section 2 presents some preliminary and background information. Section 3 discusses the problem description and reformulation, metric construction, and the proposed algorithms. In Section 4, the convergence analysis of the Riemannian coordinate descent algorithm is presented. Sections 5 and 6 contain the experimental results and conclusion, respectively.

\section{Preliminaries and background}
In this paper, calligraphic letters are used for representing tensors $(\mathcal{A}, \mathcal{B}, ...)$ and capital letters for representing matrices \linebreak $(A,B,...)$. In the following subsections, we provide some definitions and after that some background on the Riemannian preconditioning. 
\subsection{Definitions}
In this subsection, we give some definitions.
\begin{definition}[Tensor]
A tensor is a $d$-mode multi-dimensional array $\mathcal{X} \in \mathbb{R}^{n_1 \times \cdots \times n_d}$ with $n_i$ as the dimension of the $i$th mode. Each element in a tensor is denoted by $\mathcal{X}(k_1, ... , k_d),$ for $k_i \in [n_i]=\{1, ..., n_i\}$. Scalars, vectors and matrices are $0$-, $1$- and $2$-mode tensors, respectively.
\end{definition}

\begin{definition}[Matricization (unfolding)]
The matricization along the $i$th \linebreak mode, denoted by $X_{(i)} \in \mathbb{R}^{n_i \times \prod_{j\neq i}n_j}$, is constructed by putting tensor fibers of the $i$th mode alongside each other.
Tensor mode-$i$ fibers are determined by fixing indices in all modes except the $i$th mode, i.e. $\mathcal{X}(k_1,...,k_{i-1},:,k_{i+1},...,k_d)$.
\end{definition}

\begin{definition}[Multi-linear rank]
A tensor is called a rank-$(r_1, ... , r_d)$ tensor, if we have $rank(X_{(i)})=r_i$, for $i=1,...,d$, which indicates the dimension of the vector space spanned by mode-$i$ fibers. 
It is a generalization of the matrix rank.
\end{definition}

\begin{definition}[$i$-mode product] \label{i-mode}
For tensor $\mathcal{X} \in \mathbb{R}^{n_1 \times \cdots \times n_d}$ and matrix $A\in\mathbb{R}^{m\times n_i}$, the $i$-mode product $\mathcal{X}\times_i A \in \mathbb{R}^{n_1 \times \cdots \times n_{i-1} \times m \times n_{i+1} ... \times n_d}$ can be computed by the following formula:
\begin{equation*}
    (\mathcal{X}\times_i A)(k_1,...,k_{i-1},l,k_{i+1},...,k_d) = \sum_{k_i = 1}^{n_k} \mathcal{X}(k_1,...,k_i,...,k_d)A(l,k_i) \ .
\end{equation*}
	
Because of the relation $(\mathcal{X}\times_i A)_{(i)} = AX_{(i)}$, $i$-mode product can be thought of a transformation from a $n_i$-dimensional space to a $m$-dimensional space. 
\end{definition}

\begin{definition}[Tensor norm] The norm of a tensor $\mathcal{X}$ is given by
	\begin{displaymath}
		\lVert \mathcal{X} \rVert_F = \lVert X_{(i)} \rVert_F = \lVert vec(\mathcal{X}) \rVert,
	\end{displaymath}
	where $F$ is the Frobenious norm and $vec(.)$ is the vectorization operator.
\end{definition}

\begin{definition}[Stiefel manifold $St(n,r)$] \label{Stiefel}
    The set of all orthonormal $r_i$ frames in $\mathbb{R}^{n_i}$ is called the Stiefel manifold:
    \begin{equation*}
        St(n,r) = \{ X\in \mathbb{R}^{n\times r} : X^T X = I_r\}.
    \end{equation*}
\end{definition}

In this manifold, \emph{tangent vectors} at a point $X$ can be written as $\xi_X = X\Omega + X^\perp B$, where $\Omega \in Skew(r) = \{A\in \mathbb{R}^{r \times r} : A^T = -A\}$ and $X^\perp$ completes the orthonormal basis formed by $X$, so $X^T X^\perp = 0$. 
If vectors in the normal space are identified by $\nu_X = XA$, we can specify $A$ by implying the orthogonality between tangent vectors and normal vectors.
\begin{equation*}
    \xi_X \perp \nu_X \ : \ \langle \xi_X , \nu_X \rangle =  \langle X\Omega + X^\perp B, XA \rangle = 0 \quad \Longrightarrow \quad A \in Sym(r),
\end{equation*}
where $Sym(r)$ is the set of all $r \times r$ symmetric matrices.

The \emph{projection} of an arbitrary vector $Z \in \mathbb{R}^{n\times r}$ onto the tangent space is given by $Proj_X Z = Z - XA$, wherein $A$ is chosen such that the projection complies to the tangent vectors constraint, i.e. $\xi^T X + X^T \xi = 0$:
\begin{equation*}
    (Z - XA)^T X + X^T (Z-XA) = 0 \qquad \Longrightarrow \qquad A = sym(X^T Z),
\end{equation*}
where $sym(\cdot)$ returns the symmetirc part of the input matrix.

In a Stiefel manifold, like any embedded submanifold, the Riemannian gradient $\nabla f$ can be obtained by projecting the Euclidean gradient $G$ onto the tangent space of the current point.
\begin{equation*}
    \nabla f(X) = Proj_{X} G(X) = G(X) - X sym(X^T G(X)).
\end{equation*}

A \emph{retraction} $\mathcal{R}_x : T_x \mathcal{M} \rightarrow \mathcal{M}$ at the point $x$ on the manifold $ \mathcal{M}$ is a way of moving along a direction in the tangent space $T_x \mathcal{M}$ while staying on the manifold. More on that can be found in \cite[chapter 3.6]{boumal2020introduction}. 
For the Stiefel manifold we use $QR$-decomposition for retraction, that is $\mathcal{R}_X(\xi_X) = qr(X+\xi_X)$, where $qr$ is the orthonormal part in the $QR$-decomposition.	

\begin{definition}[Grassmann manifold $Gr(n,r)$]
We define two matrices $X$ and $Y$ to be equal under \emph{equivalence relation $\sim$} over $St(n,r)$, if their column spaces span the same subspace. We can define one of these matrices as a transformed version of the other, i.e., $X=YQ$, for some $Q\in O(r)$, where $O(r)$ is the set of all $r \times r$ orthogonal matrices.
	
We identify elements in the Grassmann manifold with this equivalence class, that is:
\begin{equation*}
    [X] = \{Y\in St(n,r) : X \sim Y \} = \{XQ : Q\in O(r)\}.
\end{equation*}
\end{definition}

The Grassmann manifold $Gr(n, r)$ is a quotient manifold, $St(n, r)/O(r) = \{[X]: X \in St(n,p)\}$, which represents the set of all linear $r$-dimensional subspaces in a $n$-dimensional vector space.
	
Consider a quotient manifold that is embedded in a total space $\mathcal{M}$
given by the set of equivalence relation $\sim$. 
A Riemannian metric $\langle \cdot, \cdot \rangle_x$ at $x\in \mathcal{M}$ in the total space can induce a Riemannian metric $\langle \cdot, \cdot \rangle_{[x]}$ on the quotient manifold $\mathcal{M}/\sim$
\begin{equation*}
    \langle \xi_{[x]}, \eta_{[x]} \rangle_{[x]} = \langle \xi_x, \eta_x\rangle_x,
\end{equation*}
where vectors $\xi_x$ and $\eta_x$ belong to $\mathcal{H}_x$, the horizontal space of $T_x \mathcal{M}$.
This subspace provides a valid matrix representation of the abstract tangent space $T_{[x]}\mathcal{M}/\sim$ as $\xi_x$ and $\eta_x$ are unique representations of the abstract tangent vectors $\xi_{[x]}$ and $\eta_{[x]}$, respectively.
The horizontal space is the orthogonal complement to the vertical space in this Riemannian metric.
The vertical space is defined as $\mathcal{V}_x = \operatorname{ker}D\pi(x)$, where $\pi: x \mapsto \pi(x)=[x]$ is the natural projection that links the total space to its quotient.

If the cost function in the total space does not change in the directions of vectors in the vertical space, then the Riemannian gradient in the quotient manifold is given by,
\begin{equation*}
    \nabla_{[x]}f = \nabla_xf.
\end{equation*}

A \emph{retraction} operator $\mathcal{R}_x : \mathcal{H}_x \rightarrow \mathcal{M}$ can be given by,
\begin{equation*}
    \mathcal{R}_{[x]}(\xi_{[x]}) = [\mathcal{R}_x(\xi_x)],
\end{equation*}
where $\mathcal{R}_x(.)$ is a retraction in the total manifold.
For further information on the aforementioned concepts in Riemannian manifold optimization, refer to \cite{absil2009optimization, boumal2020introduction}.

\subsection{Riemannian preconditioning}
Mishra and Sepulchre \cite{mishra2016riemannian} brought attention to relation between sequential quadratic programming which embeds constraints into the Lagrangian and the Riemannian Newton method which encodes constraints into the search space. 
Then, they exploited this relation and introduced a way of building Riemannian metrics. Here we bring the gist of their work.

Consider the optimization problem,
\begin{equation} \label{equality-const}
	\begin{split}
		\min_{x\in \mathbb{R}^n} \quad & f(x), \\
		\textrm{s.t.} \quad & h(x) = 0,
	\end{split}
\end{equation}
where $f: \mathbb{R}^n \rightarrow \mathbb{R}$ and $h: \mathbb{R}^n \rightarrow \mathbb{R}^p, n\geq p$ are smooth functions. Sequential quadratic programming deals with the the unconstrained Lagrangian  which is defined as
\begin{equation*}
	\mathcal{L}(x,\lambda) = f(x) - \langle \lambda, h(x) \rangle,
\end{equation*}
in which $\lambda \in \mathbb{R}^p$ represents the Lagrange multiplier. From the optimality condition we know that at a local minimum $x$, Lagrange multiplier $\lambda_x$, can be obtained by 
\begin{equation*}
	h_x(x) \lambda_x = f_x(x),
\end{equation*}
where $f_x$ is the first-order derivative of the cost function $f(x)$ and $h_x(x)\in \mathbb{R}^{n\times p}$ is the Jacobian of the constraints  $h(x)$. Applying the pseudo inverse of $h_x(x)$ to $f_x(x)$, we arrive at 
\begin{equation*}
	\lambda_x = (h_x(x)^T h_x(x))^{-1} h_x(x)^T f_x(x),
\end{equation*}
where $h_x(x)$ is assumed to have a full column rank.
Therefore the set $\mathcal{M}:=\{ x \in \mathbb{R}^n \ | \ h(x)=0 \}$ has an embedded differentiable submanifold structure, and we can recast the equality constrained problem \cref{equality-const} as an unconstrained optimization problem on a nonlinear search space.
The authors stated that in the neighborhood of a local minimum, the second-order derivative of the Lagrangian in the total space efficiently gives us the second-order information of the problem. 
The theorem below is brought for more clarification.

\begin{theorem}[Theorem 3.1 in \cite{mishra2016riemannian}] \label{2.9}
	Consider an equivalence relation $\sim$ in $\mathcal{M}$. Assume that both $\mathcal{M}$ and $\mathcal{M}/$$\sim$ have the structure of a Riemannian manifold and a function $f: \mathcal{M}\rightarrow \mathbb{R}$ is a smooth function with isolated minima on the quotient manifold. Assume also that $\mathcal{M}$ has the structure of an embedded submanifold in $\mathbb{R}^n$.
	If $x^* \in \mathcal{M}$ is a local minimum of $f$ on $\mathcal{M}$, then the following hold:
	\begin{itemize}
		\item $
		\langle \eta_{x^*} , D^2 \mathcal{L}({x^*}, \lambda_{x^*}) [\eta_{x^*}] \rangle = 0, \quad \forall \eta_{x^*} \in \mathcal{V}_{x^*},
		$
		\item the quantity $\langle \xi_{x^*} , D^2 \mathcal{L}({x^*}, \lambda_{x^*}) [\xi_{x^*}] \rangle$ captures all second-order information of the cost function $f$ on $\mathcal{M}/\sim$ for all $\xi_{x^*} \in \mathcal{H}_{x^*}$,
	\end{itemize}
	where $\mathcal{V}_{x^*}$ is the vertical space, and  $\mathcal{H}_{x^*}$ is the horizontal space (that subspace of $T_{x^*}\mathcal{M}$ which is orthogonal to the vertical space) and $D^2 \mathcal{L}({x^*}, \lambda_{x^*}) [\xi_{x^*}]$ is the second-order derivative of $\mathcal{L}({x}, \lambda_{x})$ with respect to $x$ at $x^*\in \mathcal{M}$ applied in the direction of $\xi_{x^*} \in \mathcal{H}_{x^*}$ and keeping $\lambda_{x^*}$ fixed to its least-squares estimate.
\end{theorem}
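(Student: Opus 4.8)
The plan is to reduce both claims to one fact: at the minimizer $x^*$, the ambient quadratic form $\xi\mapsto\langle\xi,D^2\mathcal{L}(x^*,\lambda_{x^*})[\xi]\rangle$, restricted to $T_{x^*}\mathcal{M}$, is exactly the Riemannian Hessian of $f$ on $\mathcal{M}$ at $x^*$; then I would transport this identification through the Riemannian submersion $\pi:\mathcal{M}\to\mathcal{M}/\sim$ using the splitting $T_{x^*}\mathcal{M}=\mathcal{V}_{x^*}\oplus\mathcal{H}_{x^*}$.

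First I would check that the least-squares multiplier is \emph{exact} at $x^*$. Since $x^*$ is a local minimum of $f$ on the embedded submanifold $\mathcal{M}=\{h=0\}$, its Riemannian gradient vanishes, so $f_x(x^*)\perp T_{x^*}\mathcal{M}$ and hence $f_x(x^*)$ lies in the column space of $h_x(x^*)$. The normal equations defining $\lambda_x$ are therefore solved with zero residual, $f_x(x^*)=h_x(x^*)\lambda_{x^*}$, i.e. $\mathcal{L}_x(x^*,\lambda_{x^*})=0$: $x^*$ is a critical point of $x\mapsto\mathcal{L}(x,\lambda_{x^*})$ in the ambient sense. Next I would establish the reduced-Hessian identity
\begin{equation*}
	\langle\xi,D^2\mathcal{L}(x^*,\lambda_{x^*})[\zeta]\rangle=\operatorname{Hess}(f|_{\mathcal{M}})(x^*)[\xi,\zeta],\qquad \xi,\zeta\in T_{x^*}\mathcal{M}.
\end{equation*}
This comes from differentiating $h\equiv 0$ twice along a curve in $\mathcal{M}$ through $x^*$: the normal component of $D^2h(x^*)[\xi,\zeta]$ is the second fundamental form of $\mathcal{M}\hookrightarrow\mathbb{R}^n$, and pairing it against $\lambda_{x^*}$ converts the projection of $D^2f(x^*)$ onto $T_{x^*}\mathcal{M}$ into the Levi-Civita Hessian; the would-be cross terms vanish precisely because $\mathcal{L}_x(x^*,\lambda_{x^*})=0$ from the previous step.

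Finally I would descend to the quotient. The metric-invariance hypothesis stated earlier makes $\pi$ a Riemannian submersion, $f=\bar f\circ\pi$ for a smooth $\bar f$ with isolated minima, with $d\pi_{x^*}$ restricting to a linear isometry $\mathcal{H}_{x^*}\to T_{\pi(x^*)}(\mathcal{M}/\sim)$ and kernel $\mathcal{V}_{x^*}$. Because $f$ is constant on the fibre $[x^*]$ (whose tangent space is $\mathcal{V}_{x^*}$) and $x^*$ is $f$-critical, the restriction of $\operatorname{Hess}(f|_{\mathcal{M}})(x^*)$ to $\mathcal{V}_{x^*}$ is zero; with the reduced-Hessian identity this is the first bullet. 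For $\xi\in\mathcal{H}_{x^*}$, the submersion Hessian formula (valid since $f$ is constant along fibres) gives $\operatorname{Hess}(f|_{\mathcal{M}})(x^*)[\xi,\xi]=\operatorname{Hess}\bar f(\pi(x^*))[d\pi_{x^*}\xi,d\pi_{x^*}\xi]$; as $d\pi_{x^*}|_{\mathcal{H}_{x^*}}$ is an isometry and $\pi(x^*)$ is a local minimum of $\bar f$, the form $\langle\xi,D^2\mathcal{L}(x^*,\lambda_{x^*})[\xi]\rangle$ reproduces the entire Riemannian Hessian of $\bar f$ at its minimizer, i.e. all of its second-order information on $\mathcal{M}/\sim$, which is the second bullet.

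The hard part is the reduced-Hessian identity: one must unwind $D^2\mathcal{L}$ into the ambient Hessian of $f$ plus the multiplier-weighted Hessian of the constraints, recognize the latter (after projection) as the second fundamental form / Weingarten term of $\mathcal{M}$ in $\mathbb{R}^n$, and use the exactness $\mathcal{L}_x(x^*,\lambda_{x^*})=0$ to kill the remaining first-order contributions. Everything after that is routine bookkeeping with the Riemannian submersion and the fibrewise invariance of $f$.
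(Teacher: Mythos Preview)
The paper does not prove this theorem; it is quoted verbatim from \cite{mishra2016riemannian} as background to motivate the preconditioned metric, with no accompanying proof environment. There is therefore no ``paper's own proof'' to compare your proposal against.

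That said, your argument is sound and is essentially the standard route to this result. The three ingredients you isolate are exactly the right ones: (i) at a constrained minimizer the least-squares multiplier is the true KKT multiplier, so $\mathcal{L}_x(x^*,\lambda_{x^*})=0$; (ii) the reduced-Hessian identity $\langle\xi,D^2\mathcal{L}(x^*,\lambda_{x^*})[\zeta]\rangle=\operatorname{Hess}_{\mathcal{M}}f(x^*)[\xi,\zeta]$ on $T_{x^*}\mathcal{M}$, which is the classical Lagrangian/Weingarten identification (cf.\ Absil--Mahony--Sepulchre, \S5.3); and (iii) passage to the quotient via Riemannian submersion, using that $f$ is fibre-constant and $x^*$ is critical to kill the vertical block and to match the horizontal block with $\operatorname{Hess}\bar f$ at the quotient minimizer. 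Each step is correct as stated. The only place to be careful is in (iii): the identity $\operatorname{Hess}_{\mathcal{M}}f(\xi,\xi)=\operatorname{Hess}_{\mathcal{M}/\sim}\bar f(d\pi\xi,d\pi\xi)$ for horizontal $\xi$ generally carries an O'Neill $A$-tensor correction paired against $\operatorname{grad}\bar f$; you implicitly use that this correction vanishes because $\pi(x^*)$ is critical for $\bar f$, which is fine but worth making explicit.
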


The proper search direction in  sequential quadratic programming  is computed by solving the following optimization problem in the neighborhood of a minimum:
\begin{displaymath}
	\arg\min_{\xi_x \in \mathcal{H}_x} f(x) - \langle f_x(x), \xi_x \rangle + \frac{1}{2} \langle \xi_{x} , D^2 \mathcal{L}({x}, \lambda_{x}) [\xi_{x}] \rangle .
\end{displaymath}

If $\langle \xi_x , D^2 \mathcal{L}(x_k, \lambda_x) [\xi_x] \rangle$ is strictly positive for all tangent vectors $\xi_x$ in the horizontal space $\mathcal{H}_x$ at the point $x$, then this optimization problem has a unique solution. After updating the variables by moving along the obtained direction, to maintain strict feasibility, it needs a projection onto the constraint, thus they name this method \emph{feasibly projected sequential quadratic programming}.

Now that we know that the Lagrangian captures second-order information of the problem, the authors in \cite{mishra2016riemannian} introduced a family of regularized metrics that incorporate the second information by using the second-order derivative of the Lagrangian,
\begin{displaymath}
	\langle\xi_x, \eta_x\rangle_x =\omega_1 \langle \xi_x, D^2 f(x)[\eta_x] \rangle + \omega_2 \langle \xi_x, D^2 c(x, \lambda_x)[\eta_x] \rangle ,
\end{displaymath}
in which $c(x, \lambda_x) = - \langle \lambda_x, h(x) \rangle$ and $\omega_1 \in [0,1], \omega_2 \in [0,1]$. The first and second terms of this regulated metric correspond to the cost function and the constraint, respectively. In addition to invariance, the metric needs to be positive definite, so:
\begin{align*}
	&\text{if} \ \ D^2f \succ 0 \quad \text{then} \quad \omega_1 = 1,\quad \omega_2 = \omega \in [0,1), \\
	&\text{if} \ \ D^2f \prec 0 \quad \text{then} \quad \omega_2 = 1,\quad \omega_1 = \omega \in [0,1),
\end{align*}
where $\omega$ can also be updated in each iteration by a rule like $\omega^k = 1 - 2^{1-k}$. 
We refer the reader to the original paper for further details \cite[Section 3.3]{mishra2016riemannian}.
Mishra and Kasai in \cite{kasai2016low} exploited the idea of Riemannian preconditioning for tensor completion.

\section{Problem statement}
In the Tucker decomposition, we want to decompose a dense $d$-mode tensor $\mathcal{X} \in \mathbb{R}^{n_1 \times \cdots \times n_d}$ into a core tensor $\mathcal{C} \in \mathbb{R}^{r_1 \times \cdots \times r_d}$ and $d$ orthonormal factor matrices $U_i \in St(n_i,r_i)$. 
The Domain of the objective function is the following product manifold,
\begin{equation*}
	(\mathcal{C}, U_1, ..., U_d)\in \mathcal{M} := \mathbb{R}^{r_1\times \cdots \times r_d} \times St(n_1,r_1) \times \cdots \times St(n_d,r_d).
\end{equation*}

We solve the following optimization problem:
\begin{equation}
    \label{TD}
    \min_{(\mathcal{C},U_1,...,U_d)\in\mathcal{M}} \quad \lVert \mathcal{X} - \mathcal{C} \times_1 U_1 \times_2 \cdots \times_d U_d \rVert_{F}^{2},
\end{equation}
where $\lVert . \rVert_F$ is the Frobenius norm. 
The objective function has a symmetry for the manifold of orthogonal matrices $O(r_i)$, i.e.,
\begin{equation*}
    f(\mathcal{C},U_1,...,U_d) = f(\mathcal{C}\times_1 O_1^T \times_2 \cdots \times_d O_d^T, U_1 O_1, \cdots, U_d O_d).
\end{equation*}
So, this problem is actually an optimization problem on the product of Grassmann manifolds.

We know from \cite{de2000best} that 
\begin{equation}
    \label{eq:Error}
    \lVert \mathcal{X} - \mathcal{C}\times \{U\} \rVert^2_F = \lVert \mathcal{X} \rVert^2_F -\lVert \mathcal{C} \rVert^2_F,
\end{equation}
where $\mathcal{C}\times\{U\} = \mathcal{C}\times_1 U_1 \times_2 \cdots \times_d U_d $. This means that the minimization of the \emph{reconstruction error} is equivalent to the maximization of \emph{the energy of the projected tensor}. So, for solving the problem \cref{TD} in a coordinate descent fashion we can recast it as a series of subproblems involving the following maximization problem for the $ith$ factor matrix:
\begin{equation} \label{TD reformulate}
	\max_{[U_i] \in Gr(n_i, r_i)} \frac{1}{2} \lVert U_i^T Y_{(i)} \rVert^2_F. 
\end{equation}
In this problem $Y_{(i)}$ is the matricization of the tensor $\mathcal{Y}_i$ along the $i$th mode. The tensor $\mathcal{Y}_i \in \mathbb{R}^{r_1\times \cdots \times r_{i-1} \times n_i \times r_{i+1} \times \cdots \times r_d}$ is produced by projecting the tensor $\mathcal{X}$ to a lower dimensional subspace by the help of the assumed fixed factor matrices while mode $i$ is excluded, i.e.,
\begin{equation}
    \label{TensorY}
    \mathcal{Y}_{i} = \mathcal{X}\times_{-i} \{ U^T \} =  \mathcal{X}\times_1 U_1^T \times_2 \cdots \times_{i-1} U_{i-1}^T \times_{i+1} U_{i+1}^T \times_{i+2} \cdots \times_d U_d^T. 
\end{equation}

If we proceed to decompose a dense tensor $\mathcal{X}$ with a low multi-linear rank using the formulation given in \cref{TD reformulate} on the product of Grassmannian manifolds equipped with the Euclidean metric
\begin{displaymath}
    \langle\xi_{U_i}, \eta_{U_i}\rangle_{U_i} = \operatorname{trace}(\xi_{U_i}^T \eta_{U_i}),
\end{displaymath}
we observe that coordinate descent would have poor convergence results. The relative error plots for $10$ random samples of $\mathcal{X}\in \mathbb{R}^{100\times 100\times 100}$ with multi-linear rank-$(5,5,5)$ can be seen in \cref{metric}.

\begin{figure}[tb] \label{metric}
\centering
  \resizebox{0.55\textwidth}{!}{\input{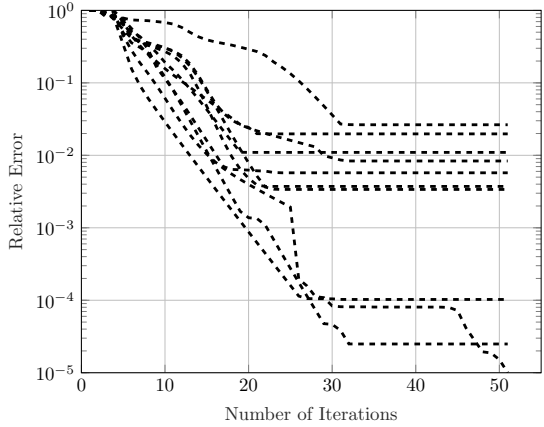}}
  \caption{Convergence of Riemannian coordinate descent with the Euclidean metric for decomposing a random tensor having a low multi-linear rank. The best attainable relative error is zero, and it is clear that the coordinate descent method has convergence problems in the Euclidean space.}
 \end{figure}
To give a remedy for the slow convergence using the Euclidean metric, in the next subsection we apply Riemannian preconditioning to construct a new Riemannian metric which we will see in the experiments that it results in an excellent experimental performance.

\subsection{Riemannian Preconditioned Coordinate Descent}
In this section, we want to utilize the idea of Riemannian preconditioning in solving the problem \cref{TD reformulate}. For the problem
\begin{equation*}
	\max_{[U_i] \in Gr(n_i, r_i)} \frac{1}{2} \lVert U_i^T Y_{(i)} \rVert_F^2 ,
\end{equation*}
where $Y_{(i)} \in \mathbb{R}^{n_i \times \prod_{j=1, j\neq i}^{d} r_j}$, the Lagrangian is defined as
\begin{equation*}
	\mathcal{L}(U_i, \lambda) = -\frac{1}{2} Trace(Y_{(i)}^TU_iU_i^TY_{(i)}) +\frac{1}{2}\langle\lambda, U_i^TU_i - I\rangle ,
\end{equation*}
in which $\lambda \in \mathbb{R}^{r \times r}$ is a symmetric matrix representing the Lagrange multiplier. The gradient of the Lagrangian w.r.t. $U_i$ is
\begin{displaymath}
	\mathcal{L}_{U_i}(U_i, \lambda) = -Y_{(i)}Y_{(i)}^TU_i + U_i\lambda,
\end{displaymath}
which due to the optimality condition must be equal to zero at a local minimum.
We estimate $\lambda$ in a least square sense as follows:
\begin{displaymath}
	\lambda_{U_i} = U_i^TY_{(i)}Y_{(i)}^TU_i.
\end{displaymath}

We assumed that $\lambda_{U_i}$ would be invertible. 
It is because we know for a fact that  $r_i \ll n_i$ and as a result $r_i < \prod_{j\neq i} r_j$, so it is a reasonable guess that $\lambda_{U_i}$ would be invertible \cite{carlini2011ranks}.
On top of that, in all of the experiments we have conducted, we never had any issue with the invertiblity of $\lambda_{U_i}$.

The second-order derivative of the Lagrangian along $\xi_{U_i}$ while keeping $\lambda_{U_i}$ fixed (see \cref{2.9}), is
\begin{displaymath}
	D^2 \mathcal{L}(U_i, \lambda_{U_i}) [\xi_{U_i}] = -Y_{(i)}Y_{(i)}^T\xi_{U_i} + \xi_{U_i} \lambda_{U_i} .
\end{displaymath}
As we explained in Section 2.2, the second-order derivative of the Lagrangian captures all the second-order information of the cost function near the minimum. So, to incorporate this information we introduce the Riemannian metric,
\begin{displaymath}
\langle\eta_{U_i}, \xi_{U_i}\rangle_{U_i} = -\omega_1\langle\eta_{U_i}, Y_{(i)}Y_{(i)}^T\xi_{U_i}\rangle +  \omega_2\langle \eta_{U_i}, \xi_{U_i} \lambda_{U_i} \rangle ,
\end{displaymath}
where $\eta_{U_i}, \xi_{U_i} \in \mathbb{R}^{n_i \times r_i}$ are tangent vectors in $T_{U_i} \mathcal{M}$. If we could have set both $\omega_1$ and $\omega_2$ equal to one, then the Riemannian gradient using this metric would have been the Euclidean Newton direction. 
But the constants $\omega_1 \in [0, 1]$ and $\omega_2 \in [0, 1]$ should be chosen in a way to make sure that the proposed metric stays positive definite for all points on the manifold. 
Since, the matrix $-Y_{(i)}Y_{(i)}^T$ is negative semi-definite, we choose $\omega_1 = 0$ and $\omega_2=1$, which results in
\begin{equation} \label{rim-metric}
    \langle\eta_{U_i}, \xi_{U_i}\rangle_{U_i} = \langle \eta_{U_i}, \xi_{U_i} \lambda_{U_i} \rangle.
\end{equation}
Variables in the search space are invariant under the symmetry transformation, therefore the proposed metric must be invariant under the associated symmetries, i.e.
\begin{displaymath}
	U_i \mapsto U_iQ \quad \text{and} \quad \lambda_{U_i} \mapsto Q^T\lambda_{U_i} Q , \quad Q\in O(r).
\end{displaymath}
It can be verified that this property holds for the Riemannian metric \cref{rim-metric}:
\begin{displaymath}
    \langle \eta_{U_{i}} Q, \xi_{U_{i}} Q \rangle_{U_{i}} = \langle \eta_{U_{i}} Q, \xi_{U_{i}} Q Q^T \lambda_{U_{i}} Q \rangle = \langle \eta_{U_{i}}, \xi_{U_{i}} \lambda_{U_{i}} \rangle = \langle \eta_{U_{i}}, \xi_{U_{i}} \rangle_{U_{i}}.
\end{displaymath}

In embedded submanifolds, the Riemannian gradient is obtained by orthogonally projecting the Euclidean gradient onto the tangent space.
Tangent vectors at the point $U_i$ in a Stiefel manifold can be represented by $\xi = U_i\Omega + U_i^{\perp}B \in T_{U_i} \mathcal{M}$, where $\Omega \in Skew(r)$.
Writing normal vectors as $\nu = U_iA + U_i^\perp C \in N_{U_i} \mathcal{M}$, we have
\begin{align*}
    0 = \langle\xi, \nu\rangle_{U_{i}}&=\left\langle U_{i} \Omega+U_{i}^{\perp} B, U_{i} A + U_i^\perp C \right\rangle_{U_{i}} \\
    &= \left\langle U_{i} \Omega+U_{i}^{\perp} B, U_{i} A \lambda_{U_i} + U_i^\perp C \lambda_{U_i}\right\rangle \\
    &= \left\langle\Omega, A \lambda_{U_{i}}\right\rangle + \langle B, C \lambda_{U_i}\rangle.
\end{align*}
Considering $\lambda_{U_i}$ to be invertible, it results in $C=0$ and
\begin{displaymath}
	A = S \lambda_{U_i}^{-1}   , \quad S\in Sym(r).
\end{displaymath}
Therefore, by putting the normal vectors at $U_i$ as $\nu = U_i S \lambda_{U_i}^{-1} $, the projection of a given matrix $G$ onto the tangent space of the Stiefel manifold is given by,
\begin{displaymath}
\operatorname{Proj}_{U_i} G = \ G - U_i S \lambda_{U_i}^{-1},
\end{displaymath}
which must comply to the tangent vector constraint on the manifold
\begin{displaymath}
	U_i^T (\operatorname{Proj}_{U_i} G) + (\operatorname{Proj}_{U_i}G)^T U_i = 0 .
\end{displaymath}
Consequently, we have the following Sylvester equation for the symmetric matrix $S$:
\begin{displaymath}
\lambda_{U_i}S + S\lambda_{U_i} \ = \ \lambda_{U_i} (U_i^T G + G^T U_i)\lambda_{U_i}.
\end{displaymath}
By Riemannian submersion theory \cite[section 3.6.2]{absil2009optimization} , we know that this projection belongs to the horizontal space. Thus, there is no need for further projection onto the horizontal space. If we define $G$ as the Euclidean gradient in the total space, we can simply compute the Riemannian gradient by
\begin{equation*}
	\begin{aligned}
		\nabla f_{[U_i]} =& \ G + U_i .
	\end{aligned}
\end{equation*}

Although we used second-order information to form a new preconditioned metric, when we apply the Riemannian coordinate descent method to the Tucker decomposition problem, it leads to a simple first-order algorithm. 
The preconditioned metric gives us the second-order insight into the problem but the method itself is a first-order method.

With the help of the metric, we introduce the proposed RPCD method in \cref{alg:RPCD}. RPCD is memory efficient, which is desirable because we wanted to reduce the storage complexity of the original tensor $\mathcal{X}$ in the first place. 
To be specific, assume $n_i = n$ and $r_i = r$, then tensor $\mathcal{Y}_i$ has $nr^{d-1}$ elements and the Euclidean and the Riemannian gradient both have $nr$ elements. 
Empirically, we observed that the best choice for the step size is $\alpha = 1$.
In this case, one step of the inner loop in RPCD is equivalent to one step of the classic orthogonal iteration method (also called subspace iteration) \cite[Section 8.2.4]{golub1996matrix} for finding invariant subspaces.
In other words, we have shown that the orthogonal iteration method can be seen as a preconditioned Riemannian gradient descent algorithm. 
The unit step size is also compatible with the fact that the obtained direction is an approximation of the Newton direction.

\begin{algorithm}[tbhp]
\caption{RPCD/RPCD+}
\label{alg:RPCD}
\begin{algorithmic}
    \Require Dense tensor $\mathcal{X}$, set of orthonormal factor matrices $\{U\}$ randomly initialized on the Stiefel manifold, retraction operator $\mathcal{R}$, step size $\alpha$,  Plus\_flag for selecting between RPCD and RPCD+, tolerance error of the outer loop $\epsilon$, tolerance error of the inner loop specific for RPCD+ $\epsilon'$,  maximum number of iterations in the outer loop Maxiter and maximum number of iterations in the inner loop specific for RPCD+ Maxiter\_inner.
    \For{$k=1:\text{Maxiter}$}
        \For{$i=1:d$}
            \State $\mathcal{Y}_i \leftarrow \mathcal{X} \times_{-i}\{U^T\}$
            \State $U_i , C_i \leftarrow$ UPDATE($\mathcal{R}_{U_i}, Y_{(i)}, U_i$)
            \If{Plus\_flag = true}
            \State $\bar{E}_{0} \leftarrow  \sqrt{\lVert \mathcal{X} \rVert_F^2 -\lVert C_i \lVert_F^2 }/ \lVert \mathcal{X} \rVert_F$
                \For{$k'=1:\text{Maxiter\_inner}$}
                \State $U_i, C_i  \leftarrow$ UPDATE($\mathcal{R}_{U_i}, Y_{(i)}, U_i$)
                \State $\bar{E}_{k'} \leftarrow  \sqrt{\lVert \mathcal{X} \rVert_F^2 -\lVert C_i \lVert_F^2 }/ \lVert \mathcal{X} \rVert_F$
                    \If{$\bar{E}_{k'} - \bar{E}_{k'-1} \leq \epsilon'$ }
                        \State break
                    \EndIf
                \EndFor
            \EndIf
        \EndFor
        \State $E_{k} \leftarrow  \sqrt{\lVert \mathcal{X} \rVert_F^2 -\lVert U_d^T Y_{(d)} \lVert_F^2 }/ \lVert \mathcal{X} \rVert_F$
         \If{$k \geq 2$}
        \If{$E_k - E_{k-1} \leq \epsilon$}
            \State break 
        \EndIf
         \EndIf
    \EndFor
    \Function{UPDATE}{$\mathcal{R}_{U_i}, Y_{(i)}, U_i$} \label{alg:update}
    	\State $C_i  \leftarrow U_i^TY_{(i)}$
        \State $G \leftarrow -Y_{(i)}C_i^T$
        \State $\nabla f \leftarrow G + U_i $
        \State $U_i \leftarrow \mathcal{R}_{U_i}(U_{i} - \alpha \nabla f)$
        \State \Return $U_i, C_i $
    \EndFunction
    \Ensure Set of factor matrices $\{U\}$
\end{algorithmic}
\end{algorithm}

In \cref{alg:RPCD} for the retraction, we use the \emph{QR-decomposition} implemented by the \emph{Householder} algorithm in Matlab which has computational complexity $\mathcal{O}(nr^2)$ for $n \times r$ matrices.
For the stopping criterion, we use relative error delta which is the amount of difference in the relative error in two consecutive iterations, i.e., $|\text{E}_k - \text{E}_{k-1}|<\epsilon$, 
\begin{equation}
    \text{E} = \frac{\| \mathcal{X} - \hat{\mathcal{X}} \|_F}{\| \mathcal{X} \|_F} = \frac{\sqrt{\lVert \mathcal{X} \rVert_F^2 -\lVert U_i^T Y_{(i)} \lVert_F^2}}{\lVert \mathcal{X} \rVert_F}
\end{equation}
where second equality comes from equation \eqref{eq:Error} and $\Vert \mathcal{C} \rVert_F = \lVert U_i^T Y_{(i)} \lVert_F$.
Since computing the tensor $\mathcal{Y}_i$ is a lot more expensive than the rest of the inner loop computations, $\mathcal{O}(n^dr^{d-1})$, so it would be a good idea to do multiple updates in every inner loop.
In RPCD+, when \texttt{plus\_flag = true}, we repeat the updating process as long as the change in the relative error would be less than a certain threshold $\epsilon^\prime$, which can be much smaller than the stopping criterion threshold $\epsilon$.

In the next section, we provide a convergence analysis for the proposed method as an extension of the coordinate descent method to the Riemannian domain in a special case that the search space is a product manifold.

\section{Convergence analysis}
The RPCD method can be thought of as an extension of Tangent Subspace Descent (TSD) \cite{gutman2022coordinate}. 
TSD is a recent generalization of the coordinate descent method to the manifold domain. 
To provide a convergence analysis for RPCD, we generalize the convergence analysis of \cite{gutman2022coordinate} to the case of product manifolds where the exponential map and parallel transport are substituted by  retraction and  vector transport, respectively. 
Convergence analysis of the TSD method is a generalization of the Euclidean block coordinate descent method described in \cite{beck2013convergence}. 
The TSD method with retraction and vector transport is outlined in \cref{alg:TSD}. 
The projections in TSD are updated in each iteration of the inner loop with the help of the vector transport operator.

\begin{algorithm}[tbhp]
	\caption{TSD with retraction and vector transport}
	\label{alg:TSD}
	\begin{algorithmic}
		\State Given $\mathcal{R}_x(\xi)$ as a retraction from a point $x$ in the direction of $\xi$ and $\mathcal{T}_x^y$ as a vector transport operator  from a point $x$ to a point $y$, see \cref{vec-trans}.
		\Require Initial point $x^{0}\in \mathcal{M}$, and $\tilde{P}^0 = \{P_i^{x^0}\}_{i=1}^m$ are orthogonal projections onto $m$ orthogonal subspaces of the tangent space at $x^{0}$
		\For{$t = 1, 2, ...$}
		\State Set $y^0 := x^{t-1}, \ \tilde{P}^{y^0} := \tilde{P}^{t-1}$
		\For{$k = 1, ... , m$}
		\State $\alpha_k=\frac{1}{L_k}$
		\Comment{{\tiny $L_k$ is the Lipschitz constant for each block of variables determined by \cref{lip-block}}}
		\State Update $y^{k}=\mathcal{R}_{y^{k-1}}(-\alpha_k P^{y^{k-1}}_{k}\nabla f(y^{k-1}))$
		\State Update $P^{y^{k}}_{i}=\mathcal{T}^{y^{k}}_{y^{k-1}}P^{y^{k-1}}_{i}\mathcal{T}^{y^{k-1}}_{y^{k}}$ for $i = 1, ... , m$.
		\EndFor
		\State Update $x^t := y^m, \ \tilde{P}^{t} := \tilde{P}^{y^m}$
		\EndFor
		\Ensure Sequence $\{x^t\} \subset \mathcal{M}$
	\end{algorithmic}
\end{algorithm}
Before, we start to study the convergence analysis, it would be helpful to quickly review some definitions:

\begin{definition}[Vector transport {\cite[Definition 8.1.1]{absil2009optimization}}] \label{vec-trans}
A vector transport on a manifold $\mathcal{M}$ is a smooth mapping
\begin{displaymath}
    \mathcal{T}^{y^{k}}_{y^{k-1}} : T_{y^{k-1}}\mathcal{M} \mapsto T_{y^{k}}\mathcal{M},
\end{displaymath}
associated with a retraction $y^k = \mathcal{R}_{y^{k-1}}(\eta)$.
\end{definition}
Here we assume that our vector transport is an isometry.
See \cite[Section 10.5]{boumal2020introduction} for other properties.

\begin{definition}[Radially Lipschitz continuously differentiable function]
We say that the pull-back function $f \circ \mathcal{R}$ is radially Lipschitz continuously differentiable for all $x \in \mathcal{M}$ if there exist a positive constant $L_{RL}$ such that for all $x$ and all $\xi \in T_x\mathcal{M}$ the following holds for $r>0$,
\begin{equation*}
   \Big | \frac{d}{dt}(f\circ\mathcal{R})(t\xi)|_{t=r} - \frac{d}{dt}(f\circ\mathcal{R}) (t\xi)|_{t=0} \Big | \leq r L_{RL} \|\xi \|
\end{equation*}
where $\frac{d}{dt}$ is the first-order derivative of a single-variable function.
\end{definition}

\begin{definition}[Operator $S^k$] \label{S}
	It is given as,
	\begin{displaymath}
		S^0=id_{T_{y^0}\mathcal{M}} \quad , \quad S^k=\mathcal{T}^{y^0}_{y^1}\cdots\mathcal{T}^{y^{k-1}}_{y^{k}}= S^{k-1} \mathcal{T}^{y^{k-1}}_{y^{k}} \ ; \ 1\le k \le l,
	\end{displaymath}
	where $id$ is the identity operator. With this operator, we can write the update rule for the projection matrices as $P^{y^{k}}_{i} = (S^k)^{-1} P^{y^0}_{i} S^k$.
\end{definition}

\begin{definition}[retraction-convex]
    Function $f:\mathcal{M} \rightarrow \mathbb{R}$ is retraction-convex w.r.t $\mathcal{R}$ for all $\eta \in T_x \mathcal{M}$, $ \left\| \eta \right\|_x=1$, if the pull-back function $f \circ \mathcal{R}_x$ is convex in its domain.
\end{definition}

\begin{proposition}[First-order characteristic of a retraction-convex function]
    If $f:\mathcal{M} \rightarrow \mathbb{R}$ is retraction-convex w.r.t a retraction $\mathcal{R}_x:T_x\mathcal{M \rightarrow \mathcal{M}}$, then we know  by definition that the pull-back function is convex. 
    For any direction $\eta \in T_x\mathcal{M}$, by the first-order characteristic of the convex function $f(\mathcal{R}_x(r\eta)):\mathbb{R}\rightarrow \mathcal{M}$, we have
    \begin{equation*}
       f(\mathcal{R}_x(r\eta)) \geq  f(\mathcal{R}_x(s\eta)) + (r-s)\frac{d}{dt}(f \circ \mathcal{R}_x)(t)|_{t=s},
    \end{equation*}
 where $r,s \in \mathbb{R}$.
    The second term can be interpreted as
    \begin{equation*}
        \frac{d}{dt}(f \circ \mathcal{R}_x)(t)|_{t=s} = Df(\mathcal{R}_x(s\eta))[\textbf{J}\mathcal{R}_x(s\eta)] = \langle \nabla f(\mathcal{R}_x(s\eta)), \textbf{J}\mathcal{R}_x(s\eta)\rangle_{\mathcal{R}_x(s\eta)},
    \end{equation*}
   where $\textbf{J}\mathcal{R}$ is the Jacobian of the retraction operator and $Df(\cdot)[\cdot]$ is the directional derivative of function $f$ in a specified direction.
    Thus, for $r=1$ and $s=0$,
    \begin{displaymath}
        f(\mathcal{R}_x(\eta)) \geq  f(x) + \langle \nabla f(x), \eta \rangle_x .
    \end{displaymath}
\end{proposition}

\begin{proposition}[Restricted Lipschitz-type gradient for the pullback function] \label{lip-grad}
We know by \cite[Lemma 2.7]{boumal2019global} that if $\mathcal{M}$ is a compact submanifold of Euclidean space and if $f$ has Lipschitz continuous gradients, then
\begin{displaymath}
    f(\mathcal{R}_x(\eta)) \leq  f(x) + \langle \nabla f(x), \eta \rangle_x + \frac{L_g}{2} \|\eta\|^2_x ,\qquad \forall \eta \in  T_x \mathcal{M},
\end{displaymath}
for some $L_g>0$.
\end{proposition}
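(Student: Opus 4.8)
The statement is precisely \cite[Lemma 2.7]{boumal2019global}, so the most economical option is to cite it directly; nonetheless, here is how I would prove it from scratch. The plan is to study the pull-back $g_x := f\circ\mathcal{R}_x \colon T_x\mathcal{M}\to\mathbb{R}$, Taylor-expand it to second order at the origin, and bound the remainder uniformly in $x\in\mathcal{M}$ by exploiting compactness of $\mathcal{M}$.

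First I would invoke the two defining properties of a retraction, $\mathcal{R}_x(0_x)=x$ and $D\mathcal{R}_x(0_x)=\mathrm{id}_{T_x\mathcal{M}}$, to obtain $g_x(0)=f(x)$ and $\frac{d}{d\tau}g_x(\tau\eta)\big|_{\tau=0}=\langle\nabla f(x),\eta\rangle_x$. Taylor's theorem with integral remainder along $\tau\mapsto g_x(\tau\eta)$ then yields
\[ f(\mathcal{R}_x(\eta)) = f(x) + \langle\nabla f(x),\eta\rangle_x + \int_0^1(1-\tau)\,\frac{d^2}{d\tau^2}g_x(\tau\eta)\,d\tau, \]
so it remains to bound $\big|\frac{d^2}{d\tau^2}g_x(\tau\eta)\big|$ by $L_g\|\eta\|_x^2$ for $\tau\in[0,1]$. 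I would then argue by cases. For $\|\eta\|_x\le\rho$ with $\rho$ a fixed radius, the set $\{(x,\eta)\colon x\in\mathcal{M},\ \eta\in T_x\mathcal{M},\ \|\eta\|_x\le\rho\}$ is compact because $\mathcal{M}$ is, and $(x,\eta)\mapsto\sup_{\tau\in[0,1]}\big|\frac{d^2}{d\tau^2}g_x(\tau\eta)\big|$ is continuous on it (using smoothness of $\mathcal{R}$ together with a $C^{1,1}$ extension of $f$ to a tubular neighborhood of $\mathcal{M}$, so that the chain rule applies); hence it is bounded there by some $L_0$, which gives the inequality with constant $L_0$ in this regime. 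For $\|\eta\|_x>\rho$, compactness of $\mathcal{M}$ bounds $|f|\le F$ and $\|\nabla f\|\le G$ on $\mathcal{M}$, so the right-hand side is at least $-F-G\|\eta\|_x+\frac{L_g}{2}\|\eta\|_x^2$ while the left-hand side is at most $F$; choosing $L_g\ge L_0$ large enough that $\frac{L_g}{2}\rho^2-G\rho-2F\ge 0$ makes the inequality automatic there. This $L_g$ then works in both cases.

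The hard part will be the uniform second-order bound: I must make sure the retraction is defined and smooth on a large enough subset of the tangent bundle and that $f$, a priori only given on $\mathcal{M}$, is regular enough off the manifold for $\frac{d^2}{d\tau^2}(f\circ\mathcal{R}_x)$ to be controlled; the standard remedy is the $C^{1,1}$ extension of $f$ to an ambient neighborhood, after which compactness closes the argument. Since this is exactly what \cite[Lemma 2.7]{boumal2019global} carries out, and the manifolds relevant to us (products of Grassmannians, realized through Stiefel with the $QR$-retraction) are compact, the proposition follows immediately.
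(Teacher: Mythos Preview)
Your proposal is correct, and in fact the paper does not prove this proposition at all: it is stated purely as a citation of \cite[Lemma 2.7]{boumal2019global}, exactly the ``most economical option'' you identify. Your from-scratch sketch is a faithful reconstruction of how that lemma is established, with one small quibble: in the standard setup of Boumal's lemma, $f$ is already defined on the ambient Euclidean space with Lipschitz gradient there, so no $C^{1,1}$ extension step is needed---the chain rule applies directly to $f\circ\mathcal{R}_x$ as a composition of maps between Euclidean spaces.
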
		

First, we study the first-order optimality condition in the following proposition. 

\begin{proposition}[Optimality condition]
	Assume $f$ is a retraction-convex function and there is a retraction curve between any two points on the Riemannian manifold $\mathcal{M}$, then
	\begin{displaymath}
		\nabla f(x^*)=0 \quad \Leftrightarrow \quad  x^* \ is \; a \; minimizer.
	\end{displaymath}
\end{proposition}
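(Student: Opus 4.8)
The plan is to establish the two implications separately. The ``$\Leftarrow$'' direction (a stationary point is a minimizer) will rely on the first-order characteristic of retraction-convex functions proved just above, while the ``$\Rightarrow$'' direction (a minimizer is stationary) is the standard first-order necessary condition and needs only that $\mathcal{R}_{x^*}$ is a bona fide retraction, not retraction-convexity.

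For ``$\Rightarrow$'', I would suppose $x^*$ minimizes $f$ over $\mathcal{M}$ and fix an arbitrary $\eta \in T_{x^*}\mathcal{M}$. Since $\mathcal{R}_{x^*}$ is defined on a neighborhood of the origin of $T_{x^*}\mathcal{M}$, the pull-back $g(t):=f(\mathcal{R}_{x^*}(t\eta))$ is smooth on some interval $(-\delta,\delta)$ with $g(0)=f(x^*)$. Because $x^*$ is a global minimizer, $t=0$ minimizes $g$, hence $g'(0)=0$; and, exactly as in the computation inside the first-order-characteristic proposition, $g'(0)=\langle\nabla f(x^*),\eta\rangle_{x^*}$. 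As $\eta$ is arbitrary, this forces $\nabla f(x^*)=0$.

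For ``$\Leftarrow$'', I would assume $\nabla f(x^*)=0$ and let $y\in\mathcal{M}$ be arbitrary. By the hypothesis that any two points are joined by a retraction curve, there is $\eta\in T_{x^*}\mathcal{M}$ with $\mathcal{R}_{x^*}(\eta)=y$ and with $\tau\mapsto\mathcal{R}_{x^*}(\tau\eta)$ defined on $[0,1]$, along which $f$ is retraction-convex. Applying the first-order characteristic with $t=1$, $s=0$ then yields $f(y)=f(\mathcal{R}_{x^*}(\eta))\geq f(x^*)+\langle\nabla f(x^*),\eta\rangle_{x^*}=f(x^*)$. Since $y$ was arbitrary, $x^*$ is a global minimizer.

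The only bookkeeping left is matching the hypotheses precisely: retraction-convexity is stated for unit tangent vectors and parameter $t>0$, so to apply it to the joining direction $\eta$ one rescales $\eta=\|\eta\|_{x^*}\hat\eta$ with $\|\hat\eta\|_{x^*}=1$ and reparametrizes, after which the first-order inequality transfers verbatim; and the one-sided differentiability of $g$ at $0$ is immediate from smoothness of the retraction. I expect the main (and essentially the only) subtle point to lie in the sufficiency direction: it is exactly the global convexity of $f$ along the curve connecting $x^*$ to $y$ that upgrades the local stationarity condition to a global minimality statement, so the argument hinges on the standing assumption that such a connecting retraction-convex curve always exists rather than on any quantitative estimate.
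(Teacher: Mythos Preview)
Your proposal is correct and follows essentially the same argument as the paper: the necessity of $\nabla f(x^*)=0$ comes from the pull-back $t\mapsto f(\mathcal{R}_{x^*}(t\eta))$ having a minimum at $t=0$, and sufficiency comes from the first-order characteristic of retraction-convexity together with the assumed existence of a connecting retraction curve. One cosmetic slip: since the statement reads $\nabla f(x^*)=0 \Leftrightarrow x^*$ is a minimizer, your ``$\Rightarrow$'' and ``$\Leftarrow$'' labels are swapped relative to the content you actually prove under each heading.
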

\begin{proof}
	Considering any differentiable curve $\mathcal{R}_{x^*}(t\eta)$, which starts at a local optimum point $x^*$, the pull-back function $f(\mathcal{R}_{x^*}(t\eta))$ has a minimum at $t=0$ because $\mathcal{R}_{x^*}(t\eta)\big|_{t=0} = x^*$. We know that $(f\circ \mathcal{R})'(0) = \langle \nabla f(x^*), \eta \rangle_{x^*}$, so for this to be zero for all $\eta \in T_{x^*}\mathcal{M}$, we must have $\nabla f(x^*) = 0$.
	
	From the first-order characteristic of the retraction-convex function $f$ we have,
	\begin{equation*}
		f(\mathcal{R}_{x^*}(\eta)) \geq  f(x^*) + \langle \nabla f(x^*), \eta \rangle_{x^*}, \qquad \forall \eta \in \mathcal{R}^{-1}_{x^*}(x),
	\end{equation*}
	and if $\nabla f(x^*) = 0$ then $f(x) \geq f(x^*)$, hence the point $x^*$ is a global minimum point.
\end{proof}

With the following Lip-Block lemma and the descent direction advocated by  \cref{alg:TSD}, we can prove the Sufficient Decrease lemma.

\begin{lemma}[Lip-Block] \label{lip-block}
    If $f$ has the restricted Lipschitz-type gradient, then for any $i,k \in \{1, ... , m\}$ and all $\nu \in Im(P^{k-1}_i) \subset T_{y^{k-1}}\mathcal{M}$, where $Im(\cdot)$ is the subspace that a projection matrix spans, there exist constants $0 < L_1, ... , L_m < \infty$ such that
    \begin{equation} \label{Lip-Block}
        f(\mathcal{R}_{y^{k-1}}(\nu)) \leq f(y^{k-1}) + \langle \nabla f(y^{k-1}), \nu \rangle_{y^{k-1}}  +\frac{L_i}{2}\lVert\nu\rVert^2 _{y^{k-1}}.
    \end{equation}
\end{lemma}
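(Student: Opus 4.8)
The plan is to read \cref{Lip-Block} off \cref{lip-grad} essentially directly, and then to refine the constant so that it is indexed by the block rather than global. First I would fix $i,k$ and note that any admissible direction $\nu \in \mathrm{Im}(P_i^{k-1})$ is in particular a tangent vector at $y^{k-1}$; hence \cref{lip-grad}, applied at $x = y^{k-1}$ with $\eta = \nu$, yields exactly \cref{Lip-Block} with $L_i$ replaced by the global constant $L_g$. This already establishes the existence claim (take $L_1 = \cdots = L_m = L_g$); the reason for carrying separate symbols $L_i$ is that in the ensuing analysis one wants the smallest constant that works on the $i$-th block, which can be strictly smaller than $L_g$ and which fixes the inner-loop step size $\alpha_k = 1/L_k$.

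To obtain genuine block constants I would define $L_i$ as the supremum of the quotient $2\big(f(\mathcal{R}_y(\nu)) - f(y) - \langle\nabla f(y),\nu\rangle_y\big)/\lVert\nu\rVert_y^2$ taken over all iterates $y$ produced by \cref{alg:TSD}, all nonzero $\nu$ in the corresponding block subspace $\mathrm{Im}(P_i^{y})$, and (if $\mathcal{R}$ is not globally defined) all such $\nu$ for which $\mathcal{R}_y(\tau\nu)$ stays on $\mathcal{M}$ for $\tau\in[0,1]$. By the previous paragraph every such quotient is $\le L_g$, so $L_i \le L_g < \infty$; and $L_i > 0$ because the restriction of $f$ to these directions is not affine (its pull-back Hessian on the block is nonzero). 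With this definition \cref{Lip-Block} holds by construction, and the step direction actually used in the inner loop, $\nu = -\alpha_k P_k^{y^{k-1}}\nabla f(y^{k-1})$, lies in $\mathrm{Im}(P_k^{y^{k-1}})$, so the lemma delivers precisely the descent inequality that the subsequent Sufficient Decrease lemma will invoke.

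The one point that needs care — and the part I expect to be the real obstacle — is that the supremum defining $L_i$ runs over an a priori unbounded family of block subspaces, one for each inner and outer iteration, so I must argue it is finite and depends only on $i$, not on the counters $k$ and $t$. Finiteness is handed to us by the uniform bound $L_g$ of \cref{lip-grad}, which itself rests on compactness of $\mathcal{M}$ and Lipschitz continuity of $\nabla f$ through \cite[Lemma 2.7]{boumal2019global}. For the identification of $L_i$ as a genuine per-block quantity I would use that each subspace $\mathrm{Im}(P_i^{y})$ is the image of the single initial subspace $\mathrm{Im}(P_i^{x^0})$ under a composition of \emph{isometric} vector transports (the rule $P_i^{y^k} = (S^k)^{-1} P_i^{y^0} S^k$ from the definition of the operator $S^k$), so that, together with compactness of $\mathcal{M}$, the continuous function $(y,v)\mapsto 2\big(f(\mathcal{R}_y(v)) - f(y) - \langle\nabla f(y),v\rangle_y\big)/\lVert v\rVert_y^2$ on the compact unit sphere bundle $\{(y,v): \lVert v\rVert_y = 1\}$ attains a finite maximum over the closed subset corresponding to block $i$. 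Everything outside this uniformity bookkeeping is a direct quotation of \cref{lip-grad}, so I would expect the write-up to be short.
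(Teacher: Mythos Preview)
Your approach is correct and coincides with the paper's: the paper's entire proof is the single observation that $\nu \in T_{y^{k-1}}\mathcal{M}$, so \cref{Lip-Block} is an immediate specialization of \cref{lip-grad}, which is exactly your first paragraph (taking $L_i = L_g$ already suffices for the existence claim). Your second and third paragraphs, where you refine to genuine per-block constants via a supremum and justify finiteness through compactness and isometry of the transports, go well beyond what the paper actually provides.
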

\begin{proof}
	By the fact that $\nu \in T_{y^{k-1}}\mathcal{M}$, it can be seen easily that \cref{Lip-Block} is the block version of the restricted Lipschitz-type gradient for the pullback function.
\end{proof}

\begin{lemma}[Sufficient decrease] \label{sufficient lemma}
    Assume $f$ has the restricted Lipschitz-type gradient, and furthermore $f \circ \mathcal{R}$ is a radially Lipschitz continuous differentiable function. 
	Using the projected gradient onto the $k$th subspace in each inner loop iteration of \cref{alg:TSD}, i.e., $\nu = - \frac{1}{L_k} P^{y^{k-1}}_k \nabla f(y^{k-1})$, where $P^{y^{k-1}}_k$ is the orthogonal projection to $k$th subspace of $T_{y^{k-1}}\mathcal{M}$, then we have
	\begin{equation} \label{Dec-inner}
		f(y^0) - f(y^m) \geq \sum_{k=1}^m\frac{1}{2L_k}\lVert P^{y^{k-1}}_k \nabla f(y^{k-1}) \rVert^2_{y^{k-1}} \ .
	\end{equation}
	The following inequality also holds
	\begin{equation} \label{Inequality} 
		\| P^{y^0}_i\nabla f(y^0) - P^{y^0}_{i}S^{i-1} \nabla f(y^{i-1}) \|_{y^0}^2\le C\sum_{k=1}^{i-1}\| P^{y^k-1}_k\nabla f(y^{k-1})\|_{y^{k-1}}^2 \ ,
	\end{equation}
	for $C=(m-1)L_{RL}^2/L_{min}^2$, where $L_{\min}=\min\{L_1, ..., L_m\}$ and $L_{RL}$ is the radially Lipschitz constant. 
	For more details on the operator $S$ see \cref{S}.
	Furthermore, there is a lower bound on the cost function decrease at each iteration of the outer loop in \cref{alg:TSD}:
	\begin{equation} \label{Dec-outer}
		f(y^0) - f(y^m) \ge \frac{1}{4L_{max}(1+Cm)}\|\nabla f(y^0)\|_{y^{0}}^2,
	\end{equation}
	where $L_{\max}=\max\{L_1, ..., L_m\}$.
\end{lemma}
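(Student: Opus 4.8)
The plan is to prove the three displayed inequalities in the stated order, since \cref{Dec-inner} is essentially immediate from \cref{lip-block}, \cref{Inequality} is the technical core of the argument, and \cref{Dec-outer} is a bookkeeping step that combines the other two. Throughout I write $\nu_k = -\frac{1}{L_k}P^{y^{k-1}}_k\nabla f(y^{k-1})$ for the inner search vector, so that $y^k = \mathcal{R}_{y^{k-1}}(\nu_k)$ and $\nu_k \in \mathrm{Im}(P^{y^{k-1}}_k)\subset T_{y^{k-1}}\mathcal{M}$.

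For \cref{Dec-inner}, apply \cref{lip-block} with $i = k$ and $\nu = \nu_k$, which gives $f(y^k) \le f(y^{k-1}) + \langle \nabla f(y^{k-1}), \nu_k\rangle_{y^{k-1}} + \frac{L_k}{2}\|\nu_k\|^2_{y^{k-1}}$. Since $P^{y^{k-1}}_k$ is an orthogonal projection, $\langle \nabla f(y^{k-1}), P^{y^{k-1}}_k\nabla f(y^{k-1})\rangle_{y^{k-1}} = \|P^{y^{k-1}}_k\nabla f(y^{k-1})\|^2_{y^{k-1}}$, so substituting $\nu_k$ collapses the last two terms to $-\frac{1}{2L_k}\|P^{y^{k-1}}_k\nabla f(y^{k-1})\|^2_{y^{k-1}}$; rearranging and summing the resulting telescoping bound over $k = 1,\dots,m$ yields \cref{Dec-inner}.

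For \cref{Inequality}, I would first drop the outer projection via $\|P^{y^0}_i z\|_{y^0}\le\|z\|_{y^0}$, reducing matters to a bound on $\|\nabla f(y^0) - S^{i-1}\nabla f(y^{i-1})\|_{y^0}$. Using $S^j = S^{j-1}\mathcal{T}^{y^{j-1}}_{y^j}$ and $S^0 = \mathrm{id}$ one has the telescoping identity $\nabla f(y^0) - S^{i-1}\nabla f(y^{i-1}) = \sum_{j=1}^{i-1} S^{j-1}\bigl(\nabla f(y^{j-1}) - \mathcal{T}^{y^{j-1}}_{y^j}\nabla f(y^j)\bigr)$; each $S^{j-1}$ is a composition of isometric vector transports and hence norm-preserving, so the triangle inequality followed by Cauchy--Schwarz gives $\|\nabla f(y^0) - S^{i-1}\nabla f(y^{i-1})\|^2_{y^0}\le (i-1)\sum_{j=1}^{i-1}\|\nabla f(y^{j-1}) - \mathcal{T}^{y^{j-1}}_{y^j}\nabla f(y^j)\|^2_{y^{j-1}}$ with $i-1\le m-1$. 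The missing ingredient — and where I expect the real work to lie — is the per-step gradient-stability estimate $\|\nabla f(y^{j-1}) - \mathcal{T}^{y^{j-1}}_{y^j}\nabla f(y^j)\|_{y^{j-1}}\le L_{RL}\|\nu_j\|_{y^{j-1}}$: this is precisely what radially Lipschitz continuous differentiability of $f\circ\mathcal{R}$ is introduced to supply (playing the role that Lipschitz continuity of the gradient along geodesics plays in \cite{gutman2019coordinate}), and establishing it carefully, in particular reconciling the isometric vector transport with the radial derivative of the pull-back along $\tau\mapsto\mathcal{R}_{y^{j-1}}(\tau\nu_j)$, is the main obstacle. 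Granting it, and using $\|\nu_j\|_{y^{j-1}} = \frac{1}{L_j}\|P^{y^{j-1}}_j\nabla f(y^{j-1})\|_{y^{j-1}}\le\frac{1}{L_{\min}}\|P^{y^{j-1}}_j\nabla f(y^{j-1})\|_{y^{j-1}}$, one recovers exactly $C = (m-1)L_{RL}^2/L_{\min}^2$.

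For \cref{Dec-outer}, bound $\|P^{y^0}_i\nabla f(y^0)\|_{y^0}\le\|P^{y^0}_i S^{i-1}\nabla f(y^{i-1})\|_{y^0} + \|P^{y^0}_i\nabla f(y^0) - P^{y^0}_i S^{i-1}\nabla f(y^{i-1})\|_{y^0}$. For the first summand, the identity $P^{y^{i-1}}_i = (S^{i-1})^{-1}P^{y^0}_i S^{i-1}$ gives $P^{y^0}_i S^{i-1}\nabla f(y^{i-1}) = S^{i-1}P^{y^{i-1}}_i\nabla f(y^{i-1})$, whose norm equals $\|P^{y^{i-1}}_i\nabla f(y^{i-1})\|_{y^{i-1}}$ by the isometry of $S^{i-1}$; the second summand is controlled by \cref{Inequality}. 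Applying $(a+b)^2\le 2a^2+2b^2$, summing over $i = 1,\dots,m$, and observing that in the resulting double sum each term $\|P^{y^{j-1}}_j\nabla f(y^{j-1})\|^2_{y^{j-1}}$ appears at most $m-1$ times, one obtains $\|\nabla f(y^0)\|^2_{y^0,\tilde P}\le 2\bigl(1 + C(m-1)\bigr)\sum_{k=1}^m\|P^{y^{k-1}}_k\nabla f(y^{k-1})\|^2_{y^{k-1}}\le 2(1+Cm)\sum_{k=1}^m\|P^{y^{k-1}}_k\nabla f(y^{k-1})\|^2_{y^{k-1}}$. Combining this with \cref{Dec-inner} and the crude bound $\frac{1}{2L_k}\ge\frac{1}{2L_{\max}}$ gives $f(y^0) - f(y^m)\ge\frac{1}{2L_{\max}}\sum_{k=1}^m\|P^{y^{k-1}}_k\nabla f(y^{k-1})\|^2_{y^{k-1}}\ge\frac{1}{4L_{\max}(1+Cm)}\|\nabla f(y^0)\|^2_{y^0,\tilde P}$, which is \cref{Dec-outer}.
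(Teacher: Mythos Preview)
Your proposal is correct and follows essentially the same approach as the paper: the same telescoping in the Lip-Block inequality for \cref{Dec-inner}, the same drop-the-projection-then-telescope argument for \cref{Inequality}, and the same add-and-subtract followed by $(a+b)^2\le 2a^2+2b^2$ and summation for \cref{Dec-outer}. The one step you flag as the ``main obstacle'' --- the per-step bound $\|\nabla f(y^{j-1}) - \mathcal{T}^{y^{j-1}}_{y^j}\nabla f(y^j)\|_{y^{j-1}}\le L_{RL}\|\nu_j\|_{y^{j-1}}$ --- is in fact asserted rather than derived in the paper's proof as well, so your caution there is warranted but you are not missing an argument that the paper supplies.
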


\begin{proof}
The inequality \cref{Inequality} was proved in \cite[Lemma 4.3]{gutman2022coordinate} with a slight difference in notation and using the exponential map and parallel transport instead of a retraction and a corresponding vector transport.
Because of radially Lipschitz continuous differentiability of $f \circ \mathcal{R}$, we have constant $L_{RL}$ instead of $L_f$.
The inequalities \cref{Dec-inner} and \cref{Dec-outer} were proven in \cite[Lemma 4.1]{gutman2022coordinate} with the mentioned changes.
\end{proof}

In the following theorem, we give a convergence rate for the global convergence, then we prove a tighter global rate of convergence for retraction-convex functions.

\begin{theorem}[Global convergence] \label{thm.local-conv}
	Assume $f$ has restricted Lipschitz-type gradient and is lower bounded. Then for the sequence generated by  \cref{alg:TSD}, we have $\lVert \nabla f(x^t) \rVert^2_{x^t}\rightarrow 0$, and we have the following as the rate of convergence:
	\begin{equation} \label{local-conv}
		\min_{i = \{1,...,t\}} \lVert \nabla f(x^{i-1}) \rVert_{x^{i-1}} \leq \sqrt{\frac{1}{t} \Big ( f(x^0) - f(x^t) \Big ) 4L_{max}(1+Cm)} \ .
	\end{equation}
\end{theorem}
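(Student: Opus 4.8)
The plan is to derive \cref{local-conv} by a straightforward telescoping argument built directly on the Sufficient Decrease inequality \cref{Dec-outer} of \cref{sufficient lemma}. First I would pin down the outer-loop bookkeeping: at the start of outer iteration $t$ of \cref{alg:TSD} we have $y^0 = x^{t-1}$ with projections $\tilde{P}^{y^0} = \tilde{P}^{t-1}$, and at its end $y^m = x^t$ with $\tilde{P}^{t} = \tilde{P}^{y^m}$. Instantiating \cref{Dec-outer} for this single outer iteration therefore gives
\begin{equation*}
	f(x^{t-1}) - f(x^t) \ \ge\ \frac{1}{4L_{\max}(1+Cm)}\,\lVert \nabla f(x^{t-1}) \rVert_{x^{t-1},\tilde{P}^{t-1}}^2 ,
\end{equation*}
where it matters that the decomposed norm is taken with respect to exactly the transported projections $\tilde{P}^{t-1}$ that \cref{alg:TSD} carries over, which is precisely the setting in which \cref{sufficient lemma} was established.

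Next I would sum this inequality over $i = 1,\dots,t$. The left-hand side telescopes, so
\begin{equation*}
	f(x^0) - f(x^t) \ \ge\ \frac{1}{4L_{\max}(1+Cm)}\sum_{i=1}^{t}\lVert \nabla f(x^{i-1}) \rVert_{x^{i-1},\tilde{P}^{i-1}}^2 .
\end{equation*}
Since $f$ is lower bounded, the left-hand side is bounded above uniformly in $t$, hence the nonnegative series $\sum_{i\ge 1}\lVert \nabla f(x^{i-1}) \rVert_{x^{i-1},\tilde{P}^{i-1}}^2$ converges; in particular its general term tends to zero, which is the asserted $\lVert \nabla f(x^t) \rVert^2_{x^t,\tilde{P}^{t}}\to 0$. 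For the explicit rate \cref{local-conv}, I would then use that the minimum of finitely many nonnegative numbers is at most their average:
\begin{equation*}
	\min_{i\in\{1,\dots,t\}}\lVert \nabla f(x^{i-1}) \rVert_{x^{i-1},\tilde{P}^{i-1}}^2 \ \le\ \frac{1}{t}\sum_{i=1}^{t}\lVert \nabla f(x^{i-1}) \rVert_{x^{i-1},\tilde{P}^{i-1}}^2 \ \le\ \frac{4L_{\max}(1+Cm)\bigl(f(x^0)-f(x^t)\bigr)}{t},
\end{equation*}
and finish by taking square roots.

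I do not anticipate a genuine obstacle in this theorem: the substantive analytic work is already absorbed into \cref{sufficient lemma} — in particular the Lipschitz-type drift bound \cref{Inequality}, which is what controls the discrepancy between $\nabla f(y^0)$ and the transported $S^{i-1}\nabla f(y^{i-1})$ and is the place where the move from parallel transport to an isometric vector transport actually bites. What remains here is the routine telescoping-and-averaging step familiar from the Euclidean block coordinate descent analysis of \cite{beck2013convergence}. The only point demanding a little care is the index bookkeeping across outer iterations, so that \cref{Dec-outer} is always invoked with the decomposed norm associated to the currently maintained projection family $\tilde{P}^{t-1}$; once that is tracked correctly, both the convergence-to-zero statement and the $O(1/\sqrt{t})$ rate follow immediately.
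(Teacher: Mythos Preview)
Your proposal is correct and follows essentially the same approach as the paper: invoke \cref{Dec-outer} at each outer iteration, telescope to obtain $f(x^0)-f(x^t)\ge \frac{1}{4L_{\max}(1+Cm)}\sum_{i=1}^t\lVert\nabla f(x^{i-1})\rVert_{x^{i-1},\tilde P^{i-1}}^2$, use lower boundedness of $f$ for the convergence-to-zero claim, and extract the rate via the min-$\le$-average bound. The paper's proof is actually terser than yours, leaving the last step implicit.
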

\begin{proof}
	From \cref{Dec-outer}, we have
	\begin{equation} \label{outer-decrease} 
		f(x^0) - f(x^t) \geq \frac{1}{4L_{max}(1+Cm)} \sum_{i=1}^{t} \lVert \nabla f(x^{i-1}) \rVert^2_{x^{i-1}} \ ,
	\end{equation}
	So, from the fact that $f$ is lower bounded, we can easily conclude that
	\begin{displaymath}
		t\rightarrow \infty \quad : \quad \lVert \nabla f(x^t) \rVert^2_{x^t}\rightarrow 0. 
	\end{displaymath}
        The inequality \cref{local-conv} can be derived effortlessly from \cref{outer-decrease}.
\end{proof}

\begin{theorem}[Tighter global convergence]
\label{thm.convergence}
	Let $f: \mathcal{M} \rightarrow \mathbb{R}$ be a retraction-convex function and the Sufficient Decrease lemma holds for the sequence $\{x^t\} \subset \mathcal{M}$. If we denote the sufficient decrease constant $1/K$, i.e.
	\begin{displaymath}
		\frac{L_{\min }^{2}}{4 L_{\max }\left(L_{\min }^{2}+L_{RL}^{2} m(m-1)\right)} = \frac{1}{K},
	\end{displaymath}
	then for $t > 1$ and $\eta_t = \mathcal{R}^{-1}_{x^t}(x^*)$
	\begin{equation} \label{convergence}
		f(x^{t+1}) - f^* \le \frac{K \|\eta_t\|^{2}_{x^t}(f(x^1)-f^*)}{K \|\eta_t\|^{2}_{x^t}+t(f(x^1)-f^*)}
	\end{equation}
\end{theorem}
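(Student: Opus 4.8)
The plan is to reproduce the classical $O(1/t)$ convergence argument for subgradient/projected-gradient methods on convex objectives, with the Euclidean distance to the optimum replaced by $\|\eta_t\|_{x^t}=\|\mathcal{R}^{-1}_{x^t}(x^*)\|_{x^t}$ and with the per-iteration descent furnished by \eqref{Dec-outer}. Write $\Delta_t := f(x^t)-f^*$; the hypothesis that the Sufficient Decrease lemma holds makes $\{\Delta_t\}$ nonnegative and nonincreasing, so we may assume $\Delta_{t+1}>0$, since otherwise \eqref{convergence} is immediate. First I would assemble two ingredients. (i) Because $f$ is retraction-convex and, by hypothesis, a retraction curve joins $x^t$ to $x^*$, the first-order characterization of retraction-convex functions gives $f^* = f(\mathcal{R}_{x^t}(\eta_t)) \ge f(x^t) + \langle \nabla f(x^t), \eta_t\rangle_{x^t}$; rearranging and applying Cauchy--Schwarz yields $\Delta_t \le \|\nabla f(x^t)\|_{x^t}\,\|\eta_t\|_{x^t}$. (ii) Since the projections in $\tilde{P}^t$ decompose $T_{x^t}\mathcal{M}$ into mutually orthogonal subspaces, the decomposed norm equals the Riemannian norm, $\|\nabla f(x^t)\|_{x^t,\tilde{P}^t}=\|\nabla f(x^t)\|_{x^t}$, so \eqref{Dec-outer} reads $\Delta_t - \Delta_{t+1} \ge \tfrac{1}{K}\|\nabla f(x^t)\|^2_{x^t}$ with $1/K$ the constant named in the theorem.

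Combining (i) and (ii) gives the quadratic recursion $\Delta_t - \Delta_{t+1} \ge \tfrac{1}{K}\|\nabla f(x^t)\|^2_{x^t} \ge \dfrac{\Delta_t^2}{K\|\eta_t\|^2_{x^t}}$. Next I would linearize it in the usual way: divide by $\Delta_t\Delta_{t+1}>0$ and use $\Delta_{t+1}\le\Delta_t$ to get $\dfrac{1}{\Delta_{t+1}}-\dfrac{1}{\Delta_t}\ge\dfrac{1}{K\|\eta_t\|^2_{x^t}}$. Telescoping from $j=1$ to $j=t$ gives $\dfrac{1}{\Delta_{t+1}}\ge\dfrac{1}{\Delta_1}+\displaystyle\sum_{j=1}^t\dfrac{1}{K\|\eta_j\|^2_{x^j}}$, and bounding each summand below by $\dfrac{1}{K\|\eta_t\|^2_{x^t}}$ produces $\dfrac{1}{\Delta_{t+1}}\ge\dfrac{1}{\Delta_1}+\dfrac{t}{K\|\eta_t\|^2_{x^t}}$. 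Inverting this inequality gives exactly $\Delta_{t+1}\le\dfrac{K\|\eta_t\|^2_{x^t}\,\Delta_1}{K\|\eta_t\|^2_{x^t}+t\,\Delta_1}$, which is \eqref{convergence}.

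The step I expect to be the real obstacle is the last one in the telescoping: replacing $\sum_{j=1}^t\|\eta_j\|^{-2}_{x^j}$ by $t\,\|\eta_t\|^{-2}_{x^t}$ needs control over how $\|\eta_j\|_{x^j}=\|\mathcal{R}^{-1}_{x^j}(x^*)\|_{x^j}$ evolves along the run. In the Euclidean block coordinate descent of \cite{beck2013convergence} this is free, since $\|x^j-x^*\|$ is monotonically nonincreasing; for a general retraction on $\mathcal{M}$ it is not, and it must be obtained either from a contraction-type property of $\mathcal{R}$ in a neighborhood of $x^*$, from restricting attention to iterates lying in a region on which $j\mapsto\|\eta_j\|_{x^j}$ is nonincreasing, or by instead stating the bound with $\sup_{j\le t}\|\eta_j\|_{x^j}$ in place of $\|\eta_t\|_{x^t}$. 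All other steps are routine transcriptions of their Euclidean analogues, and one should also check that $1/K$ coincides with the $\tfrac{1}{4L_{\max}(1+Cm)}$ factor in \eqref{Dec-outer} after substituting $C=(m-1)L_{RL}^2/L_{\min}^2$.
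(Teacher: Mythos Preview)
Your approach is essentially identical to the paper's: retraction-convexity plus Cauchy--Schwarz to bound $\Delta_t$ by $\|\nabla f(x^t)\|_{x^t}\|\eta_t\|_{x^t}$, then the sufficient-decrease bound \eqref{Dec-outer} to obtain the quadratic recursion $\Delta_t-\Delta_{t+1}\ge \Delta_t^2/(K\|\eta_t\|_{x^t}^2)$, then the standard $1/t$ telescoping. The paper simply quotes the scalar fact ``$A_t-A_{t+1}\ge \alpha A_t^2 \Rightarrow A_{t+1}\le A_1/(1+\alpha t A_1)$'' without unpacking it, whereas you write out the $1/\Delta_{t+1}-1/\Delta_t$ step explicitly; and the obstacle you flag---that $\|\eta_j\|_{x^j}$ varies with $j$ so the recursion has a $j$-dependent $\alpha$---is real but is not addressed in the paper either, which tacitly treats $\alpha=1/(K\|\eta_t\|_{x^t}^2)$ as if it were a fixed constant when invoking that lemma.
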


\begin{proof}
	From the retraction-convexity of function $f$ we have
	\begin{displaymath}
		0 \leq f(x^t) - f(x^*) \leq - \langle \nabla f(x^t), \eta_t \rangle_{x^t} \leq \| \nabla f(x^t) \|_{x^t} \|\eta_t\|_{x^t} .
	\end{displaymath}
	After combining that with the result of Sufficient Decrease lemma \ref{sufficient lemma}, we will have
	\begin{displaymath}
		f(x^t) - f(x^{t+1}) \geq \frac{1}{K \|\eta_t\|_{x^t}^2}\big[ f(x^t) - f(x^*)\big]^2 .
	\end{displaymath}
	We know that for every real-valued decreasing sequence $A_t$ if $A_t - A_{t+1} \ge \alpha A^{2}_{t}$ for some $\alpha$, then $ A_{t+1}\le\frac{A_1}{1+A_1 \alpha t}$.
	Using this on the above inequality, we reach the convergence bound \cref{convergence}.
\end{proof}

Transitivity for vector transports, i.e. $\mathcal{T}_x^y \mathcal{T}_y^z = \mathcal{T}_x^z$, does not hold for Riemannian manifolds in general. But due to the fact that each point and each tangent vector in a product manifold is represented by Cartesian products, we can obtain the constant $C$ in a simpler way than what has come in the proof of \cref{sufficient lemma}.  
For product manifold in the Tucker decomposition problem \cref{TD}, each orthogonal projection of the gradient is simply the gradient of the cost function w.r.t the variables of one of the manifolds in the product manifold, and therefore the gradient projection belongs to the tangent space of that manifold.
For a tangent vector satisfying $\xi_{y^0} = \mathcal{R}^{-1}_{y^0}(y^{i-1})$ which is the case for product manifolds, we have
\begin{align*}
	\| \nabla f(y^0) - S^{i-1} \nabla f(y^{i-1}) \|_{y^0}^2 & \le L_{RL}^2 \|\xi_{y^0} \|_{y^0}^2  \\
	& \le L_{RL}^2 \sum_{j=1}^{i-1} \Big\| -\frac{1}{L_j}P^{y^{j-1}}_j\nabla f(y^{j-1}) \Big\|_{y^{i-1}}^2 \\
	&\le \frac{L_{RL}^2}{L_{min}^2} \sum_{j=1}^{i-1}\Big\| P^{y^{j-1}}_j\nabla f(y^{j-1}) \Big\|_{y^{i-1}}^2,
\end{align*}
where in the second inequality we exploited triangular inequality on $T_{y_0}\mathcal{M}$ and the fact that vector transport is isometric.
So, the factor $m-1$ is removed from the rates of convergence in \cref{thm.local-conv} and \cref{thm.convergence}, thus they match the rates of convergences of the coordinate descent method in the Euclidean setting \cite{beck2013convergence}.

The Tucker decomposition problem \cref{TD} is not retraction-convex, so we can not use the result of \cref{thm.convergence} for it. But by  \cref{lip-grad} and the fact that the objective function is lower bounded, we reach the following corollary from \cref{thm.local-conv}.

\begin{corollary} \label{corollary}
The RPDC algorithm given in \cref{alg:RPCD} with $m=d$ has the same rate of global convergence as given in \cref{thm.local-conv}, wherein $C = L_{RL}^2/L^2_{min}$.
\end{corollary}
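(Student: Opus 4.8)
The plan is to recognize \cref{alg:RPCD} as a particular instance of \cref{alg:TSD} and then check that its hypotheses are met, so that \cref{thm.local-conv} applies verbatim, with the product structure of the search space supplying the advertised constant $C$. Concretely, I would take $\mathcal{M}=Gr(n_1,r_1)\times\cdots\times Gr(n_d,r_d)$ equipped with the preconditioned metric constructed for problem \cref{TD reformulate}, set $m=d$, and let the $i$th orthogonal projection $P_i$ be the projection of a tangent vector onto the tangent space of the $i$th Grassmann factor. The cost is the reformulated Tucker objective $f(\{U\})=-\tfrac12\lVert\mathcal{X}\times_{1}U_1^T\times_{2}\cdots\times_{d}U_d^T\rVert_F^2$, which is $O(r_i)$-invariant and hence well defined on $\mathcal{M}$; its restriction to block $i$ with the other factors fixed is exactly the inner-loop subproblem, whose Riemannian gradient under the new metric was shown to be $G+U_i$. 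The retraction is the QR-based retraction lifted to the quotient, the vector transport is the isometric transport it induces, and choosing the inner step-size as $\alpha=1/L_i$ on block $i$, with $L_i$ the block Lipschitz constant of \cref{lip-block}, makes the inner update coincide with the TSD update.

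Next I would verify the two hypotheses of \cref{thm.local-conv}. Lower boundedness is immediate: since $\lVert\mathcal{C}\rVert_F\le\lVert\mathcal{X}\rVert_F$ for any choice of factors, $f\ge-\tfrac12\lVert\mathcal{X}\rVert_F^2$. The restricted-type Lipschitz gradient property follows from \cref{lip-grad}: the product of Grassmannians is a compact manifold and $f$ is a polynomial in the entries of the $U_i$, hence smooth with Lipschitz gradient on this compact set, so the pullback $f\circ\mathcal{R}$ satisfies the required quadratic upper bound, which is all \cref{lip-block} and \cref{sufficient lemma} need.

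Then I would invoke the product-manifold argument sketched just before the corollary to pin down $C$. For a product manifold, $P_i^{y^{k-1}}\nabla f(y^{k-1})$ is the gradient with respect to a single Grassmann factor and therefore lies in that factor's tangent space, and both the retraction and the vector transport act coordinatewise; consequently $\xi_{y^0}:=\mathcal{R}^{-1}_{y^0}(y^{i-1})$ splits across factors and the radial Lipschitz bound gives $\lVert\nabla f(y^0)-S^{i-1}\nabla f(y^{i-1})\rVert_{y^0}^2\le L_{RL}^2\lVert\xi_{y^0}\rVert^2\le(L_{RL}^2/L_{min}^2)\sum_{j=1}^{i-1}\lVert P_j^{y^{j-1}}\nabla f(y^{j-1})\rVert_{y^{j-1}}^2$, without the extra factor $(i-1)$ produced by the general chain of triangle inequalities. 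Hence \cref{Inequality} holds with $C=L_{RL}^2/L_{min}^2$, and carrying this $C$ through \cref{sufficient lemma} and the proof of \cref{thm.local-conv} yields exactly the bound \cref{local-conv} for the sequence produced by \cref{alg:RPCD}, together with $\lVert\nabla f(x^t)\rVert^2_{x^t,\tilde{P}^{t}}\to0$.

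The main obstacle is not a single inequality but the bookkeeping that legitimizes the setup: one must confirm that the preconditioned metric is genuinely a Riemannian metric on the Grassmann quotient (the invariance under $U_i\mapsto U_iQ$, $\lambda_{U_i}\mapsto Q^T\lambda_{U_i}Q$ was already checked), that the QR retraction descends to a well-defined retraction on $\mathcal{M}$ whose pullback is radially Lipschitz continuously differentiable on the compact manifold, that the induced vector transport is an isometry and that the block projections $P_i$ are the ``canonical'' coordinatewise ones so no genuine transport of the $\tilde{P}$ is needed, and that RPCD's step-size may be identified with $1/L_i$ per block so that \cref{thm.local-conv} literally applies. One should also flag explicitly that, because the Tucker cost \cref{TD} is not retraction-convex, only the local (stationarity) rate of \cref{thm.local-conv} transfers and not the global rate of \cref{thm.convergence} --- which is precisely the content of the corollary.
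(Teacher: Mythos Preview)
Your proposal is correct and follows essentially the same route as the paper: the paper's justification of the corollary is simply the paragraph preceding it (the product-manifold refinement of \cref{Inequality} yielding $C=L_{RL}^2/L_{min}^2$) together with the one-line observation that \cref{lip-grad} and lower boundedness of the objective place \cref{alg:RPCD} within the scope of \cref{thm.local-conv}. Your write-up is in fact more explicit than the paper's about the bookkeeping (invariance of the metric, coordinatewise retraction/transport, identification of the step-size with $1/L_i$, non-retraction-convexity ruling out \cref{thm.convergence}), but the underlying argument is identical.
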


It is worth noting that the proof of convergence for the HOOI method which solves the same objective function was investigated in \cite{xu2018convergence}, but it did not provide a convergence rate. 

\section{Experimental Results}
In this section, we evaluate the performance of our proposed methods on  \emph{synthetic} and \emph{real} data. 
We implemented the proposed methods in Matlab\footnote{Implementation of RPCD and RPCD+ can be found via \url{https://github.com/utvisionlab/rpcd}} R2023a and also used available Matlab implementations of other methods for comparison. 
The experiments were performed on a laptop computer with an Intel Core-i7 12700H CPU and 32 GB of memory.
We used Matlab's default setting regarding the use of CPU cores usage.
The stepsize for RPCD and RPCD+ is set to one. 
For the tables,  $\epsilon$ is put to $0.001$ and for the figures, it is set to $10^{-5}$.
For the RPCD+ algorithm, we choose $\epsilon^\prime=\epsilon/10$. 

To establish a fair and objective evaluation of our approach relative to others, we employed a uniform stopping criterion across all algorithms, based on a measure of relative error delta. 
This metric quantifies the degree to which an algorithm can reduce relative error, ensuring that any observed differences in performance are not due to variations in stopping criteria.

The reported time for each method is the actual time that the method spends on the computations which leads to the update of the parameters, and the time for calculating the relative error or other computations are not taken into account. 
For the RPCD+ algorithm, we also take into the count the time needed to evaluate the relative error in the inner loop. 
For HOOI and ST-HOSVD implementations, we use \texttt{tucker\_als} and \texttt{hosvd} from the Tensor Toolbox \cite{koldatoolbox}\footnote{The latest release can be found here \url{https://gitlab.com/tensors/tensor_toolbox/-/releases}}, respectively.

\subsection{Synthetic Data}
In this part, we give the results for two cases of Tucker decomposition on dense random tensors. 
 In both cases, the elements of random matrices or tensors are drawn from a normal distribution with zero mean and unit variance. 
 In the first case, we generate each rank-$(r_1, r_2, r_3)$ tensor $\mathcal{A}_1$ from the $i$-mode production of a random core tensor in $\mathbb{R}^{r_1\times r_2 \times r_3}$ and 3 orthonormal matrices constructed by the \emph{QR-decomposition} of random $n_i$ by $r_i$ matrices. In the second case, which has more resemblance with real data with intrinsic low-rank representations, we construct each tensor $\mathcal{A}_2$ by adding noise to a low-rank tensor,
\begin{displaymath}
	\mathcal{A}_2 = \mathcal{L} / \| \mathcal{L} \|_F + 0.1\cdot\mathcal{N} / \| \mathcal{N} \|_F ,
\end{displaymath}
where $\mathcal{L}$ is a low-rank tensor generated similar to $\mathcal{A}_1$ and $\mathcal{N}$ is a tensor with random elements.

In both set of experiments, we set $r_1 = r_2 = r_3 = 5$. 
Because of the memory limitation, except in the last experiment we increase the dimension of just the first mode of the input dense tensor to have the performance comparison in higher dimensions.
In the last experiment, we uniformly increase all dimensions of the input tensor to demonstrate the effect of tensor volume on different methods.
Each experiment is repeated 5 times and the reported time is the average value.
The results can be seen in \cref{table: synthesis}.

\tabcolsep=0.06cm
\begin{table}[tb]
{\footnotesize
    \caption{Execution time comparison in seconds for the RPCD+, HOOI and ST-HOSVD methods in the low-rank ($\mathcal{A}_1$) and low-rank with noise ($\mathcal{A}_2$) settings.}
    \label{table: synthesis}
\begin{center}
    \begin{tabular}{| c | c c c c | c c c c| }
        \hline
        & \multicolumn{4}{c}{$\mathcal{A}_1$} \vline & \multicolumn{4}{c}{$\mathcal{A}_2$} \vline \\
        n             & RPCD+ & HOOI  & {\scriptsize ST-HOSVD} & {\scriptsize ST-HOSVD} & RPCD+ & HOOI  & {\scriptsize ST-HOSVD} & {\scriptsize ST-HOSVD} \\
                      &        &        & (eigs) &  (svds) &        &       & (eigs)   & (svds)
        \\ \hline
        [100,100,100] & 0.04  & 0.1  & \textbf{0.02}  & 0.06 & 0.05  & 0.1  & \textbf{0.02}  & 0.07     \\ \hline
        [1000,100,100] & 0.08  & 0.14  & \textbf{0.05}  & 0.18 & 0.08  & 0.14  & \textbf{0.05}  & 0.39    \\ \hline
        [10,000,100,100] & \textbf{0.38}  & 1.71  & 0.84  & 1.52 & \textbf{0.4}  & 1.72  & 0.86  & 4.22  \\ \hline
        [20,000,100,100] & \textbf{0.72}  & 5.52 & 2.70 & 3.04 & \textbf{0.74}  & 5.64 & 2.83 & 9.19  \\ \hline
        [40,000,100,100] & \textbf{1.33}  & 20.17 & 9.54 & 6.09 & \textbf{1.45}  & 19.75 & 9.72 & 16.90  \\ \hline
        [1000,1000,1000] & \textbf{4.52}  & 4.56 & 7.69 & 10.72 & \textbf{5.02}  & 5.27 & 9.34 & 92.3  \\ \hline
    \end{tabular}
\end{center}
}
\end{table}

As it can be seen in \cref{table: synthesis}, increasing the dimensionality leads to the emergence of a performance gap between the RPCD+ and HOOI algorithms.
This is because in RPCD+, the subproblem \cref{TD reformulate} is solved \emph{inexactly} by the \emph{QR-decomposition}; 
but in HOOI, it is solved almost \emph{exactly} by finding the \emph{eigenvectors} of the large matrix $Y_{(i)} Y_{(i)}^T$. In \texttt{tucker\_als}, it is done by the Matlab function \texttt{eigs} that uses the \emph{Lanczos} algorithm with the number of computations (the number of additions and multiplications) approximately equal to $T(4n_i^2r_i+8r_i^3)$, where $T$ is the number of iterations in the loop of the \emph{Lanczos} algorithm. 
Without loss of the generality, we analyze the computational complexity for the last block $i=d$. 
The number of computations of $\mathcal{Y}_d$ is equal to $2\sum_{k=1}^{d-1} \prod_{l=1}^k r_l \prod_{m=k}^d n_m$. 
Therefore in total, the number of computations of HOOI is approximately equal to $2\sum_{k=1}^{d-1} \prod_{l=1}^k r_l \prod_{m=k}^d n_m+n_d^2 \prod_{k=1}^{d-1} r_k+T(4n_d^2r+8r_d^3)$ which is higher than  RPCD/RPCD+  with the number of computations approximately equal to $2\sum_{k=1}^{d-1} \prod_{l=1}^k r_l \prod_{m=k}^d n_m+T'(4n_d \prod_{k=1}^{d} r_k+2r_d^2n_d-2r_d^3/3)$, where $T'$ is the number of iterations in the inner loop of RPCD+. 
RPCD+ solves the the subproblem \cref{TD reformulate} inexactly, therefore $T'$ is significantly smaller than $T$.

Both the RPCD+ and HOOI methods reach desirable relative error, zero in the first case and $10\%$ for the second case, in the same number of iterations. 
But as the cost of each iteration is less for the RPCD+ method, we observe a reduced computational time in total.

As the Tensor Toolbox implementation of the ST-HOSVD method is inefficient due to the using the \texttt{eig} function, we modified the \texttt{hosvd} function to utilize the more efficient \texttt{eigs} and \texttt{svds} functions.
As can be seen from \cref{table: synthesis}, the ST-HOSVD method performs exceptionally well in lower dimensions, but as the dimension increases, although it outperforms HOOI, it still lags behind RPCD+.

We also investigated the case of overestimating the multi-linear rank for the synthetic data. 
We observed that in such cases, unlike the HOOI method which gets very slow, the proposed method does not need much additional time to compute the factor matrices. 
For example, in decomposing a noisy tensor $\mathcal{X}\in \mathbb{R}^{10000 \times 100 \times 100}$ and multi-linear rank$-(5,5,5)$ with over estimated lower multi-linear rank$-(7,7,7)$, the required time for RPCD+ and HOOI methods went from .39 and 1.71 seconds to .41 and 3.16 seconds, respectively.

\subsection{Real Data}
In the first experiment of this subsection, we compare the RPCD+ and HOOI methods for compressing the images in \emph{Yale face database}\footnote{A 64x64 version can be found here \url{http://www.cad.zju.edu.cn/home/dengcai/Data/FaceData.html}}\cite{belhumeur1997eigenfaces}. 
This dataset contains 165 grayscale images of 15 individuals. 
There are 11 images per subject in different facial expressions or configuration, thus we have a dense tensor $\mathcal{X} \in \mathbb{R}^{64\times 64 \times 11 \times 15}$. For two levels of compression, we decompose $\mathcal{X}$ to three tucker tensor with multi-linear rank $(16,16,11,15)$ and $(8,8,11,15)$, respectively. 
The results are shown in \cref{fig:yale}.

\begin{figure}[tb] \label{fig:yale}
	\centering
	\subfloat[16x16]{\includegraphics[width=6.5cm]{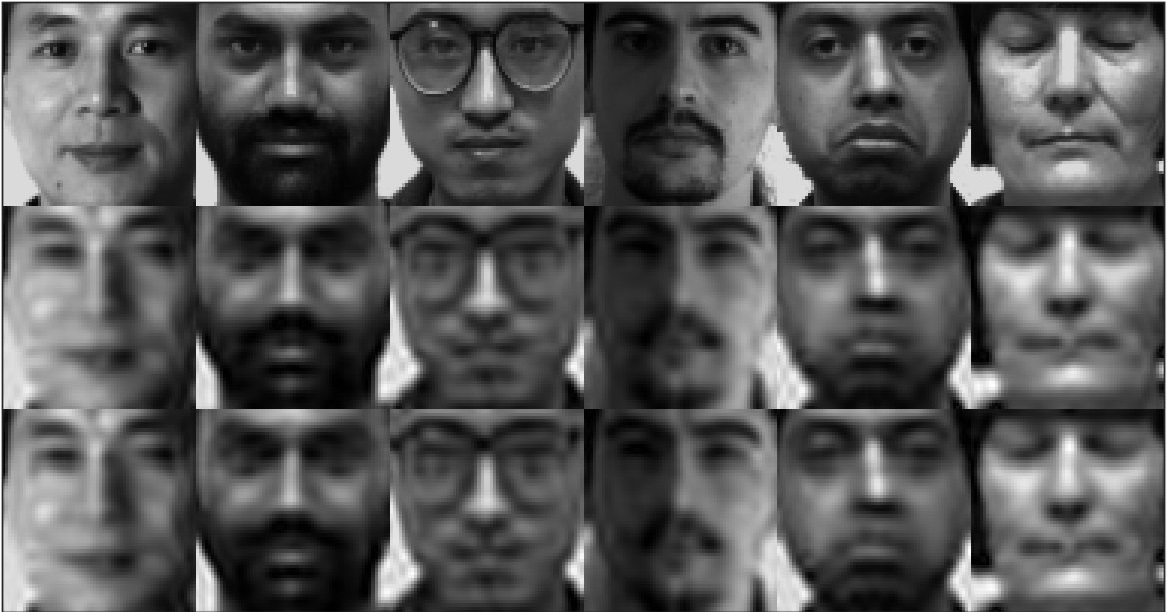}}
	\subfloat[8x8]{\includegraphics[width=6.5cm]{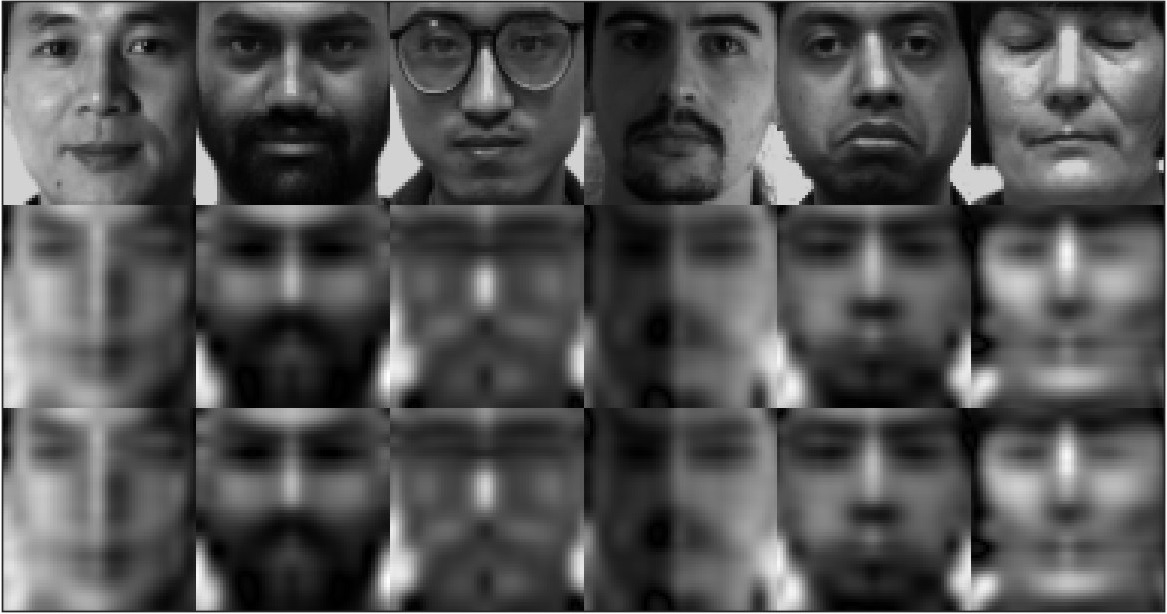}}
		\caption{Compression of Yale face database (1th row) with HOOI (2th row) and RPCD+ (3th row)}
\end{figure}

The first row in each figure contains the original images, the second and third rows contain the results of the compression using the HOOI and RPCD+ methods, respectively. The attained relative error for both algorithms are the same but RPCD+ is faster ($0.09$ vs $0.15$ seconds) for the case of $16\times16$. The difference in speed becomes larger ($0.06$ vs $0.12$ seconds), when we want to compress the data more, that is the case of $8\times8$.

In another comparison for the real data, we compare RPCD, RPCD+ and HOOI  with a newly introduced SVD-based method called D-Tucker\footnote{We use the Matlab implementation they provided in \url{https://datalab.snu.ac.kr/dtucker/}} \cite{jang2020d}. D-Tucker compresses the original tensor by performing randomized SVD on slices of the re-ordered tensor and then computes the orthogonal factor matrices and the core tensor using SVD. 
The paper \cite{jang2020d} reported that this method works well when the dimensions of a tensor is high in two modes, and the rest of the modes are low dimensional.
That is, for $\mathcal{X}_{re} \in \mathbb{R}^{I_1 \times I_2 \times K_3 \times \cdots \times K_d}$ where we have $I_1 \geq I_2 \gg K_3 \geq \cdots \geq K_d$, the algorithm needs to compute $L = K_3 \times \cdots \times K_d$ randomized SVDs.

The results from the real data are presented in \cref{table: real} and \cref{fig:real}.
As shown in \cref{table: real}, three different target ranks for each dataset are used. The first set of target ranks was chosen based on the D-Tucker paper \cite{jang2020d}. 
For the second set, we multiplied the first set of target ranks by the factor of five. 
For the third target ranks we aim for compression, around 10\% relative error, raising the ranks of the first set uniformly to achieve the desired quality. 
The first set of target ranks was used for the plots of \cref{fig:real}.
We initialized the factor matrices using the standard practice of setting the main diagonal to one and the rest to zero, as commonly done in iterative eigen-solvers. 
The reason for the discrepancy of timing results between the figure and table is the smaller stopping criterion chosen to produce the plots.  
For \cref{fig:real}, we allowed the algorithms to iterate more to clearly see the convergence behavior of different methods. 
The vertical axis in \cref{fig:real} is representing the difference between relative error of each algorithm and the relative error of the most accurate algorithm (a.k.a $E_{min}$). 
The plots are cut at $10^{-3}$ as smaller differences in relative error is minuscule and we can ignore them.

As shown in \cref{table: real}, the RPCD+ method consistently achieves a better final relative error than RPCD, thanks to its precision update process. 
Additionally, in some cases, RPCD+ is faster due to its smaller number of required iterations to converge. 
We observe that in the Air Quality and HSI datasets, D-Tucker is computationally advantageous, but as shown in \cref{fig:real}, this advantage is due to early stopping, resulting in poorer precision.
In contrast, RPCD+ and HOOI perform well in terms of speed and precision in the Yale and Coil-100 datasets, which have large $L$ values and the provided code for D-Tucker returns \texttt{``rank deficient warning''}.
For the third set of target ranks,
the implementation of D-Tucker did not work.
For Brainq and Air Quality datasets, we can see the advantage of RPCD as the cost of updating each factor matrix becomes higher for large target ranks.
In such cases, it is better to alternate between factor matrices instead of finding a better update for each of them.

Both RPCD+ and HOOI offer the best low multi-linear rank approximation, but HOOI is considerably slower in high-dimensional cases. It is worth noting that the slight difference in the relative Error between RPCD+ and HOOI reported in \cref{table: real} which is a bit in favor of HOOI is the equal stopping criterion used for the methods. Each step of HOOI decreased the error more than one step of RPCD+ as it can be seen in \cref{fig:real}. Therefore, it is natural to choose a lower stopping criterion for RPCD+ but we used equal stopping criterion to avoid the concern of its arbitrary selection.
An important observation from these experiments is that RPCD+ performs well in lower dimensions and offers superior performance in high-dimensional cases, as seen in the results from the synthetic data. 
Therefore, RPCD+ is a reliable general method for multi-linear data analysis.

\definecolor{Gray}{gray}{0.9}
\tabcolsep=0.03cm
\begin{table}[tb]
{\scriptsize
\caption{Execution time in seconds and relative error for the D-Tucker, RPCD, RPCD+ and HOOI methods on the real datasets.}
\label{table: real}
\begin{center}
    \renewcommand{\arraystretch}{1.2}
    \begin{tabular}{|>{\columncolor{Gray}}c|cc|cc|cc|cc|cc|}
        \hline
        Dataset & \multicolumn{2}{c}{Yale \cite{belhumeur1997eigenfaces}} \vline & \multicolumn{2}{c}{Brainq \cite{mitchell2008predicting}} \vline & \multicolumn{2}{c}{Air Quality\footnote{Downloaded from \url{https://datalab.snu.ac.kr/dtucker/}}} \vline& \multicolumn{2}{c}{HSI \cite{foster2006frequency}} \vline & \multicolumn{2}{c}{Coil-100 \cite{nene1996columbia}} \vline \\ \hline
        Dimension & \multicolumn{2}{c}{[64 64 11 15]} \vline & \multicolumn{2}{c}{[360 21764 9]} \vline & \multicolumn{2}{c}{[30648 376 6]} \vline& \multicolumn{2}{c}{[1021 1340 33 8]} \vline & \multicolumn{2}{c}{[128 128 72 100]} \vline \\ \hline
        Target Rank& \multicolumn{2}{c}{[5 5 5 5]} \vline & \multicolumn{2}{c}{[10 10 5]} \vline & \multicolumn{2}{c}{[10 10 5]} \vline& \multicolumn{2}{c}{[10 10 10 5]} \vline & \multicolumn{2}{c}{[5 5 5 5]} \vline \\ \hline
        \rowcolor{Gray}
        & time(s) & E(\%) & time(s) & E(\%) & time(s) & E(\%) & time(s) & E(\%) &  time(s) & E(\%)\\ \hline
        D-Tucker & 0.11 & 30.46 & \textbf{0.8} & \textbf{77.38} & \textbf{0.58} & 33.08 & \textbf{3.13} & 45.17 & 5.84 & 36.64 \\ \hline
        RPCD &  0.04 & 30.02 & 2.88 & 78.35 & 0.87 & 32.87 & 6.33 & 43.69 & 1.24 & 36.42   \\ \hline
        RPCD+ &  \textbf{0.02} & 29.93 & 2.81 & 77.87 & 0.84 & 32.74 & 4.20 & 43.48 & {\bf 0.72} & \textbf{36.35}   \\ \hline
        HOOI &  {\bf 0.02} & \textbf{29.92} & 41.09 & 77.92 & 33.82 & \textbf{32.72} & 4.31 & \textbf{43.42} & 0.74 & \textbf{36.35}  \\ \hline
        Target Rank& \multicolumn{2}{c}{[25 25 5 5]} \vline & \multicolumn{2}{c}{[50 50 5]} \vline & \multicolumn{2}{c}{[50 50 5]} \vline& \multicolumn{2}{c}{[50 50 10 5]} \vline & \multicolumn{2}{c}{[25 25 25 25]} \vline \\ \hline
        \rowcolor{Gray}
        & time(s) & E(\%) & time(s) & E(\%) & time(s) & E(\%) & time(s) & E(\%) &  time(s) & E(\%)\\ \hline
        D-Tucker & -- & -- & 3.93 & \textbf{60.78} & -- & -- & 12.18 & 32.60 & -- & -- \\ \hline
        RPCD &  0.06 & 26.84 & {\bf 2.75} & 67.86 & {\bf 1.51} & 24.44 & 9.28 & 32.18 & 2.77 & 24.37   \\ \hline
        RPCD+ &  \textbf{0.04} & 26.56 & 4.53 & 65.18 & {\bf 1.52} & 24.39 & {\bf 6.57} & 32.10 & 1.73 & 24.30   \\ \hline
        HOOI &  0.05 & \textbf{26.54} & 96.51 & 65.00 & 84.50 & {\bf 24.32} & {\bf 6.57} & \textbf{32.06} & {\bf 1.65} & \textbf{24.24}  \\ \hline
        Target Rank& \multicolumn{2}{c}{[17 17 11 15]} \vline & \multicolumn{2}{c}{[360 2700 9]} \vline & \multicolumn{2}{c}{[300 300 6]} \vline& \multicolumn{2}{c}{[220 220 33 8]} \vline & \multicolumn{2}{c}{[70 70 70 70]} \vline \\ \hline
        \rowcolor{Gray}
        & time(s) & E(\%) & time(s) & E(\%) & time(s) & E(\%) & time(s) & E(\%) &  time(s) & E(\%)\\ \hline
        D-Tucker & -- & -- & -- & -- & -- & -- & -- & -- & -- & -- \\ \hline
        RPCD &  0.08 & 10.42 & {\bf 65.75} & 10.30 & {\bf 8.05} & 10.89 & 27.38 & 10.52 & 14.58 & 10.15   \\ \hline
        RPCD+ & 0.06 & 10.37 & 101.56 & 10.16 & 14.34 & 10.84 & 22.87 & 10.47 & 10.76 & 10.02   \\ \hline
        HOOI &  \textbf{0.05} & \textbf{10.34} & 1243.86 & {\bf 10.12} & 395.23 & \textbf{10.83} & {\bf 17.96} & \textbf{10.44} & \textbf{4.58} & \textbf{9.93}  \\ \hline
    \end{tabular}
\end{center}
}
\end{table}
\begin{figure}[tbhp] \label{fig:real}
\centering
\subfloat[Yale Face]{%
	\resizebox{.32\textwidth}{!}{
%
%
\begin{tikzpicture}

\begin{axis}[%
scale = 0.55,
width=4.521in,
height=3.566in,
at={(0.758in,0.481in)},
scale only axis,
xmode=log,
xmin=1,
xmax=1.273862,
xminorticks=true,
xlabel style={font=\color{white!15!black}},
xlabel={time(s)},
ymode=log,
ymin=0.001,
ymax=1,
yminorticks=true,
ylabel style={font=\color{white!15!black}},
ylabel={$\text{relErr - relErr}_{\text{min}}$},
axis background/.style={fill=white},
xmajorgrids,
ymajorgrids,
legend style={legend cell align=left, align=left, at={(0.7,0.5)}, draw=white!15!black}
]
\addplot [color=black, dotted, line width=1.1pt, mark=triangle, mark options={solid, black}]
  table[row sep=crcr]{%
    1.0000 0.7008\\
    1.0978 0.7008\\
    1.1730 0.6441\\
    1.2215 0.0287\\
    1.2416 0.0077\\
    1.2542 0.0066\\
    1.2646 0.0066\\
};
\addlegendentry{D-Tucker}

\addplot [color=black, dashed, line width=1.1pt, mark=o, mark options={solid, black}]
  table[row sep=crcr]{%
    1.0000 0.7006\\
    1.0227 0.1130\\
    1.0357 0.0102\\
    1.0399 0.0033\\
    1.0452 0.0017\\
    1.0505 0.0010\\
};
\addlegendentry{RPCD}

\addplot [color=black, solid, line width=1.1pt, mark=square, mark options={solid, black}]
  table[row sep=crcr]{%
    1.0000 0.7006\\
    1.0097 0.0563\\
    1.0174 0.0008\\
    1.0230 0.0001\\
};
\addlegendentry{RPCD+}

\addplot [color=black, loosely dotted, line width=1.1pt, mark=diamond, mark options={solid, black}]
  table[row sep=crcr]{%
    1.0000 0.7006\\
    1.0428 0.0629\\
    1.0638 0.0007\\
    1.0834 0.0\\
};
\addlegendentry{HOOI}

\end{axis}

\end{tikzpicture}
}
\subfloat[Brainq]{%
  \resizebox{0.325\textwidth}{!}{
%
%
\begin{tikzpicture}

\begin{axis}[%
scale = 0.55,
width=4.521in,
height=3.566in,
at={(0.758in,0.481in)},
scale only axis,
xmode=log,
xmin=1,
xmax=45.1218716,
xminorticks=true,
xlabel style={font=\color{white!15!black}},
xlabel={time(s)},
ymode=log,
ymin=0.001,
ymax=1,
yminorticks=true,
ylabel style={font=\color{white!15!black}},
ylabel={$\text{relErr - relErr}_{\text{min}}$},
axis background/.style={fill=white},
xmajorgrids,
ymajorgrids,
legend style={legend cell align=left, align=left, at={(0.85,0.4)}, draw=white!15!black}
]
\addplot [color=black, dotted, line width=1.1pt, mark=triangle, mark options={solid, black}]
  table[row sep=crcr]{%
    1.0000 0.2307\\
    1.4522 0.2307\\
    1.5819 0.0270\\
    1.6784 0.0013\\
    1.7407 0.0003\\
    1.7925 0.0\\
};
\addlegendentry{D-Tucker}

\addplot [color=black, dashed, line width=1.1pt, mark=o, mark options={solid, black}]
  table[row sep=crcr]{%
    1.0000 0.2307\\
    1.2038 0.2126\\
    1.4020 0.0987\\
    1.5983 0.0711\\
    1.7913 0.0626\\
    1.9851 0.0555\\
    2.1672 0.0508\\
    2.3548 0.0450\\
    2.5454 0.0357\\
    2.7362 0.0288\\
    2.9292 0.0262\\
    3.1342 0.0237\\
    3.3256 0.0196\\
    3.5148 0.0161\\
    3.7128 0.0147\\
    3.9031 0.0142\\
};
\addlegendentry{RPCD}

\addplot [color=black, solid, line width=1.1pt, mark=square, mark options={solid, black}]
  table[row sep=crcr]{%
    1.0000 0.2307\\
    1.2119 0.2102\\
    1.4256 0.0774\\
    1.6232 0.0603\\
    1.8281 0.0547\\
    2.0547 0.0477\\
    2.2556 0.0333\\
    2.4575 0.0266\\
    2.6852 0.0218\\
    2.8952 0.0165\\
    3.0936 0.0146\\
    3.3058 0.0132\\
    3.5110 0.0111\\
    3.7100 0.0101\\
    3.9150 0.0095\\
};
\addlegendentry{RPCD+}

\addplot [color=black, loosely dotted, line width=1.1pt, mark=diamond, mark options={solid, black}]
  table[row sep=crcr]{%
    1.0000 0.2307\\
    5.3127 0.2009\\
    9.2605 0.0764\\
    13.0619 0.0606\\
    17.1890 0.0534\\
    21.3078 0.0395\\
    25.4161 0.0269\\
    29.4462 0.0184\\
    33.4792 0.0149\\
    37.2622 0.0122\\
    41.0513 0.0108\\
    44.8140 0.0100\\
};
\addlegendentry{HOOI}

\end{axis}

\end{tikzpicture}
  }%
\subfloat[Air Quality]{%
  \resizebox{0.325\textwidth}{!}{
%
%
\begin{tikzpicture}

\begin{axis}[%
scale = 0.55,
width=4.521in,
height=3.566in,
at={(0.758in,0.481in)},
scale only axis,
xmode=log,
xmin=1,
xmax=36,
xminorticks=true,
xlabel style={font=\color{white!15!black}},
xlabel={time(s)},
ymode=log,
ymin=0.001,
ymax=1,
yminorticks=true,
ylabel style={font=\color{white!15!black}},
ylabel={$\text{relErr - relErr}_{\text{min}}$},
axis background/.style={fill=white},
xmajorgrids,
ymajorgrids,
legend style={legend cell align=left, align=left, draw=white!15!black}
]
\addplot [color=black, dotted, line width=1.1pt, mark=triangle, mark options={solid, black}]
  table[row sep=crcr]{%
    1.0000 0.6727\\
    1.3606 0.6727\\
    1.4754 0.0420\\
    1.5735 0.0143\\
    1.6490 0.0142\\
};
\addlegendentry{D-Tucker}

\addplot [color=black, dashed, line width=1.1pt, mark=o, mark options={solid, black}]
  table[row sep=crcr]{%
    1.0000 0.6727\\
    1.1595 0.1325\\
    1.3143 0.0280\\
    1.4623 0.0086\\
    1.6133 0.0039\\
    1.7611 0.0022\\
    1.9092 0.0014\\
};
\addlegendentry{RPCD}

\addplot [color=black, solid, line width=1.1pt, mark=square, mark options={solid, black}]
  table[row sep=crcr]{%
    1.0000 0.6727\\
    1.1487 0.1325\\
    1.3143 0.0137\\
    1.4717 0.0025\\
    1.6253 0.0008\\
    1.7771 0.0002\\
};
\addlegendentry{RPCD+}

\addplot [color=black, loosely dotted, line width=1.1pt, mark=diamond, mark options={solid, black}]
  table[row sep=crcr]{%
    1.0000 0.6727\\
    6.8816 0.1325\\
    13.2426 0.0129\\
    20.4412 0.0020\\
    27.7763 0.0005\\
    35.3559 0.0\\
};
\addlegendentry{HOOI}

\end{axis}

\end{tikzpicture}
  } \\
\subfloat[HSI]{%
  \resizebox{.312\textwidth}{!}{
%
%
\begin{tikzpicture}

\begin{axis}[%
scale = 0.55,
width=4.521in,
height=3.566in,
at={(0.758in,0.481in)},
scale only axis,
xmode=log,
xmin=1,
xmax=8,
xminorticks=true,
xlabel style={font=\color{white!15!black}},
xlabel={time(s)},
ymode=log,
ymin=0.001,
ymax=1,
yminorticks=true,
ylabel style={font=\color{white!15!black}},
ylabel={$\text{relErr - relErr}_{\text{min}}$},
axis background/.style={fill=white},
xmajorgrids,
ymajorgrids,
legend style={legend cell align=left, align=left, at={(0.4,0.4)}, draw=white!15!black}
]
\addplot [color=black, dotted, line width=1.1pt, mark=triangle, mark options={solid, black}]
  table[row sep=crcr]{%
    1.0000 0.5657\\
    3.6915 0.5657\\
    3.8880 0.1028\\
    4.0203 0.0257\\
    4.1084 0.0166\\
    4.2129 0.0164\\
};
\addlegendentry{D-Tucker}

\addplot [color=black, dashed, line width=1.1pt, mark=o, mark options={solid, black}]
  table[row sep=crcr]{%
    1.0000 0.5657\\
    2.1727 0.1642\\
    3.3183 0.0239\\
    4.5067 0.0084\\
    5.6181 0.0046\\
    6.7246 0.0033\\
    7.8567 0.0026\\
};
\addlegendentry{RPCD}

\addplot [color=black, solid, line width=1.1pt, mark=square, mark options={solid, black}]
  table[row sep=crcr]{%
    1.0000 0.5657\\
    2.1633 0.1178\\
    3.3234 0.0046\\
    4.4962 0.0010\\
    5.6873 0.0005\\
};
\addlegendentry{RPCD+}

\addplot [color=black, loosely dotted, line width=1.1pt, mark=diamond, mark options={solid, black}]
  table[row sep=crcr]{%
    1.0000 0.5657\\
    2.1524 0.1029\\
    3.2733 0.0024\\
    4.4050 0.0001\\
    5.4820 0.0\\
};
\addlegendentry{HOOI}

\end{axis}

\end{tikzpicture}
  }
\subfloat[Coil-100]{%
  \resizebox{.325\textwidth}{!}{
%
%
\begin{tikzpicture}

\begin{axis}[%
scale = 0.55,
width=4.521in,
height=3.566in,
at={(0.758in,0.481in)},
scale only axis,
xmode=log,
xmin=1,
xmax=7.7158212,
xminorticks=true,
xlabel style={font=\color{white!15!black}},
xlabel={time(s)},
ymode=log,
ymin=0.001,
ymax=1,
yminorticks=true,
ylabel style={font=\color{white!15!black}},
ylabel={$\text{relErr - relErr}_{\text{min}}$},
axis background/.style={fill=white},
xmajorgrids,
ymajorgrids,
legend style={legend cell align=left, align=left, at={(0.8,0.8)},draw=white!15!black}
]
\addplot [color=black, dotted, line width=1.1pt, mark=triangle, mark options={solid, black}]
  table[row sep=crcr]{%
    1.0000 0.6364\\
    6.0703 0.6364\\
    6.7619 0.6158\\
    7.0911 0.0090\\
    7.3524 0.0030\\
    7.6204 0.0028\\
};
\addlegendentry{D-Tucker}

\addplot [color=black, dashed, line width=1.1pt, mark=o, mark options={solid, black}]
  table[row sep=crcr]{%
    1.0000 0.6364\\
    1.2721 0.1342\\
    1.5279 0.0158\\
    1.7668 0.0040\\
    2.0176 0.0016\\
    2.2619 0.0007\\
};
\addlegendentry{RPCD}

\addplot [color=black, solid, line width=1.1pt, mark=square, mark options={solid, black}]
  table[row sep=crcr]{%
    1.0000 0.6364\\
    1.2594 0.1308\\
    1.4984 0.0009\\
    1.7400 0.0\\
};
\addlegendentry{RPCD+}

\addplot [color=black, loosely dotted, line width=1.1pt, mark=diamond, mark options={solid, black}]
  table[row sep=crcr]{%
    1.0000 0.6364\\
    1.2903 0.1109\\
    1.5542 0.0007\\
    1.7971 0.0000\\
};
\addlegendentry{HOOI}

\end{axis}

\end{tikzpicture}
  }
 \caption{Convergence behavior of different methods for the real datasets. Y-axis is the difference between the relative error at each iteration and the best achieved relative error.}
\end{figure}
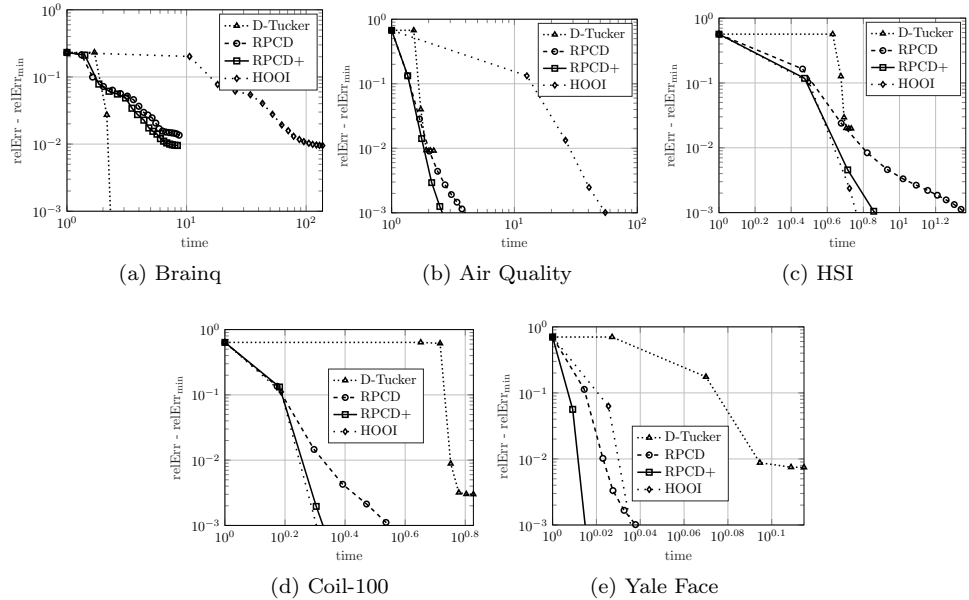

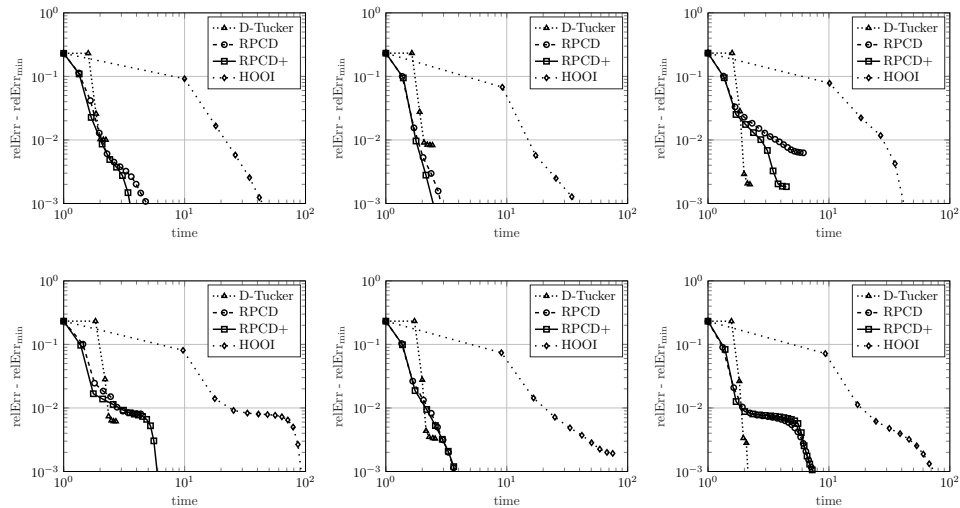
\begin{figure}[!tbhp] \label{fig:init}
\centering
\subfloat{%
  \resizebox{.32\textwidth}{!}{
%
%
\begin{tikzpicture}

\begin{axis}[%
scale = 0.55,
width=4.521in,
height=3.566in,
at={(0.758in,0.481in)},
scale only axis,
xmode=log,
xmin=1,
xmax=24,
xminorticks=true,
xlabel style={font=\color{white!15!black}},
xlabel={time(s)},
ymode=log,
ymin=0.001,
ymax=1,
yminorticks=true,
ylabel style={font=\color{white!15!black}},
ylabel={$\text{relErr - relErr}_{\text{min}}$},
axis background/.style={fill=white},
xmajorgrids,
ymajorgrids,
legend style={legend cell align=left, align=left, draw=white!15!black}
]
\addplot [color=black, dotted, line width=1.1pt, mark=triangle, mark options={solid, black}]
  table[row sep=crcr]{%
    1.0000 0.2291\\
    1.4184 0.2291\\
    1.4947 0.0246\\
    1.5437 0.0056\\
    1.5882 0.0032\\
    1.6360 0.0018\\
    1.6808 0.0016\\
};
\addlegendentry{D-Tucker}

\addplot [color=black, dashed, line width=1.1pt, mark=o, mark options={solid, black}]
  table[row sep=crcr]{%
    1.0000 0.2291\\
    1.1889 0.1089\\
    1.3629 0.0252\\
    1.5369 0.0125\\
    1.7171 0.0077\\
    1.9126 0.0054\\
    2.1296 0.0043\\
    2.3303 0.0033\\
};
\addlegendentry{RPCD}

\addplot [color=black, solid, line width=1.1pt, mark=square, mark options={solid, black}]
  table[row sep=crcr]{%
    1.0000 0.2291\\
    1.2233 0.1031\\
    1.4109 0.0090\\
    1.6220 0.0062\\
    1.8029 0.0046\\
    1.9991 0.0028\\
    2.1808 0.0010\\
    2.3679    0.0\\
};
\addlegendentry{RPCD+}

\addplot [color=black,  loosely dotted, line width=1.1pt, mark=diamond, mark options={solid, black}]
  table[row sep=crcr]{%
    1.0000 0.2291\\
    5.2417 0.0769\\
    9.1083 0.0104\\
    12.6568 0.0058\\
    16.5418 0.0043\\
    20.0706 0.0030\\
    23.5960 0.0023\\
};
\addlegendentry{HOOI}

\end{axis}

\end{tikzpicture}
  }%
\subfloat{%
  \resizebox{.32\textwidth}{!}{
%
%
\begin{tikzpicture}

\begin{axis}[%
scale = 0.55,
width=4.521in,
height=3.566in,
at={(0.758in,0.481in)},
scale only axis,
xmode=log,
xmin=1,
xmax=20,
xminorticks=true,
xlabel style={font=\color{white!15!black}},
xlabel={time(s)},
ymode=log,
ymin=0.001,
ymax=1,
yminorticks=true,
ylabel style={font=\color{white!15!black}},
ylabel={$\text{relErr - relErr}_{\text{min}}$},
axis background/.style={fill=white},
xmajorgrids,
ymajorgrids,
legend style={legend cell align=left, align=left, draw=white!15!black}
]
\addplot [color=black, dotted, line width=1.1pt, mark=triangle, mark options={solid, black}]
  table[row sep=crcr]{%
    1.0000 0.2300\\
    1.4267 0.2300\\
    1.5541 0.0268\\
    1.6439 0.0068\\
    1.7040 0.0066\\
};
\addlegendentry{D-Tucker}

\addplot [color=black, dashed, line width=1.1pt, mark=o, mark options={solid, black}]
  table[row sep=crcr]{%
    1.0000  0.2300\\ 
    1.1917  0.0953\\ 
    1.3796  0.0255\\ 
    1.5632  0.0078\\ 
    1.7716  0.0050\\ 
    1.9588  0.0040\\ 
    2.1381  0.0033\\ 
};
\addlegendentry{RPCD}

\addplot [color=black, solid, line width=1.1pt, mark=square, mark options={solid, black}]
  table[row sep=crcr]{%
    1.0000  0.2300\\ 
    1.2153  0.0902\\ 
    1.4310  0.0123\\ 
    1.6380  0.0051\\ 
    1.8411  0.0028\\ 
    2.0405  0.0012\\ 
    2.2360  0.0003\\ 
};
\addlegendentry{RPCD+}

\addplot [color=black, loosely dotted, line width=1.1pt, mark=diamond, mark options={solid, black}]
  table[row sep=crcr]{%
    1.0000  0.2300\\ 
    5.2322  0.0758\\ 
    9.0707  0.0094\\ 
    12.5983  0.0028\\ 
    16.1045  0.0009\\ 
    19.6788  0.0000\\ 
};
\addlegendentry{HOOI}

\end{axis}

\end{tikzpicture}
  }
\subfloat{%
  \resizebox{.32\textwidth}{!}{
%
%
\begin{tikzpicture}

\begin{axis}[%
scale = 0.55,
width=4.521in,
height=3.566in,
at={(0.758in,0.481in)},
scale only axis,
xmode=log,
xmin=1,
xmax=20,
xminorticks=true,
xlabel style={font=\color{white!15!black}},
xlabel={time(s)},
ymode=log,
ymin=0.001,
ymax=1,
yminorticks=true,
ylabel style={font=\color{white!15!black}},
ylabel={$\text{relErr - relErr}_{\text{min}}$},
axis background/.style={fill=white},
xmajorgrids,
ymajorgrids,
legend style={legend cell align=left, align=left, draw=white!15!black}
]
\addplot [color=black, dotted, line width=1.1pt, mark=triangle, mark options={solid, black}]
  table[row sep=crcr]{%
  1.0000  0.2324\\ 
 1.4312  0.2324\\ 
 1.4989  0.0285\\ 
 1.5564  0.0102\\ 
 1.6071  0.0100\\ 
};
\addlegendentry{D-Tucker}

\addplot [color=black, dashed, line width=1.1pt, mark=o, mark options={solid, black}]
  table[row sep=crcr]{%
  1.0000  0.2324\\ 
 1.1920  0.1143\\ 
 1.3709  0.0230\\ 
 1.5526  0.0101\\ 
 1.7669  0.0060\\ 
 1.9470  0.0040\\ 
 2.1279  0.0027\\ 
 2.3087  0.0018\\ 
};
\addlegendentry{RPCD}

\addplot [color=black, solid, line width=1.1pt, mark=square, mark options={solid, black}]
  table[row sep=crcr]{%
  1.0000  0.2324\\ 
 1.2047  0.1076\\ 
 1.3961  0.0095\\ 
 1.5805  0.0044\\ 
 1.7653  0.0030\\ 
 1.9572  0.0024\\ 
};
\addlegendentry{RPCD+}

\addplot [color=black, loosely dotted, line width=1.1pt, mark=diamond, mark options={solid, black}]
  table[row sep=crcr]{%
  1.0000  0.2324\\ 
 4.9480  0.0803\\ 
 8.4771  0.0091\\ 
 12.0059  0.0012\\ 
 15.4978  0.0001\\ 
 19.0303  0.0000\\ 
};
\addlegendentry{HOOI}

\end{axis}

\end{tikzpicture}
  }\\
\subfloat{%
  \resizebox{.32\textwidth}{!}{
%
%
\begin{tikzpicture}

\begin{axis}[%
scale = 0.55,
width=4.521in,
height=3.566in,
at={(0.758in,0.481in)},
scale only axis,
xmode=log,
xmin=1,
xmax=21,
xminorticks=true,
xlabel style={font=\color{white!15!black}},
xlabel={time(s)},
ymode=log,
ymin=0.001,
ymax=1,
yminorticks=true,
ylabel style={font=\color{white!15!black}},
ylabel={$\text{relErr - relErr}_{\text{min}}$},
axis background/.style={fill=white},
xmajorgrids,
ymajorgrids,
legend style={legend cell align=left, align=left, draw=white!15!black}
]
\addplot [color=black, dotted, line width=1.1pt, mark=triangle, mark options={solid, black}]
  table[row sep=crcr]{%
1.0000  0.2321\\ 
 1.4206  0.2321\\ 
 1.4972  0.0274\\ 
 1.5449  0.0084\\ 
 1.5950  0.0053\\ 
 1.6404  0.0046\\ 
};
\addlegendentry{D-Tucker}

\addplot [color=black, dashed, line width=1.1pt, mark=o, mark options={solid, black}]
  table[row sep=crcr]{%
1.0000  0.2320\\ 
 1.1794  0.0965\\ 
 1.3568  0.0281\\ 
 1.5675  0.0191\\ 
 1.7618  0.0164\\ 
 1.9500  0.0147\\ 
 2.1413  0.0134\\ 
 2.3313  0.0119\\ 
 2.5177  0.0098\\ 
 2.7062  0.0068\\ 
 2.8943  0.0039\\ 
 3.0817  0.0020\\ 
 3.2793  0.0009\\ 
 3.5124  0.0002\\ 
};
\addlegendentry{RPCD}

\addplot [color=black, solid, line width=1.1pt, mark=square, mark options={solid, black}]
  table[row sep=crcr]{%
1.0000  0.2320\\ 
 1.1978  0.0924\\ 
 1.4055  0.0253\\ 
 1.5945  0.0175\\ 
 1.7897  0.0155\\ 
 1.9866  0.0121\\ 
 2.1848  0.0058\\ 
 2.3707  0.0026\\ 
 2.5608  0.0021\\ 
};
\addlegendentry{RPCD+}

\addplot [color=black, loosely dotted, line width=1.1pt, mark=diamond, mark options={solid, black}]
  table[row sep=crcr]{%
1.0000  0.2320\\ 
 4.9343  0.0742\\ 
 8.3877  0.0229\\ 
 11.8847  0.0137\\ 
 15.7023  0.0059\\ 
 19.2165  0.0018\\ 
 22.9895  0.0003\\ 
 26.5623  0.0000\\ 
};
\addlegendentry{HOOI}

\end{axis}

\end{tikzpicture}
  }
  \subfloat{%
  \resizebox{.32\textwidth}{!}{
%
%
\begin{tikzpicture}

\begin{axis}[%
scale = 0.55,
width=4.521in,
height=3.566in,
at={(0.758in,0.481in)},
scale only axis,
xmode=log,
xmin=1,
xmax=37,
xminorticks=true,
xlabel style={font=\color{white!15!black}},
xlabel={time(s)},
ymode=log,
ymin=0.001,
ymax=1,
yminorticks=true,
ylabel style={font=\color{white!15!black}},
ylabel={$\text{relErr - relErr}_{\text{min}}$},
axis background/.style={fill=white},
xmajorgrids,
ymajorgrids,
legend style={legend cell align=left, align=left, draw=white!15!black}
]
\addplot [color=black, dotted, line width=1.1pt, mark=triangle, mark options={solid, black}]
  table[row sep=crcr]{%
  1.0000  0.2319\\ 
 1.4277  0.2319\\ 
 1.4929  0.0304\\ 
 1.5395  0.0040\\ 
 1.5899  0.0035\\ 
};
\addlegendentry{D-Tucker}

\addplot [color=black, dashed, line width=1.1pt, mark=o, mark options={solid, black}]
  table[row sep=crcr]{%
  1.0000  0.2319\\ 
 1.1932  0.1191\\ 
 1.3792  0.0328\\ 
 1.5578  0.0176\\ 
 1.7493  0.0109\\ 
 1.9456  0.0079\\ 
 2.1373  0.0064\\ 
 2.3269  0.0050\\ 
 2.5260  0.0036\\ 
 2.7168  0.0026\\ 
 2.9043  0.0019\\ 
};
\addlegendentry{RPCD}

\addplot [color=black, solid, line width=1.1pt, mark=square, mark options={solid, black}]
  table[row sep=crcr]{%
  1.0000  0.2319\\ 
 1.2150  0.1153\\ 
 1.4049  0.0247\\ 
 1.6252  0.0123\\ 
 1.8495  0.0066\\ 
 2.0624  0.0039\\ 
 2.2554  0.0027\\ 
 2.4507  0.0019\\ 
};
\addlegendentry{RPCD+}

\addplot [color=black, loosely dotted, line width=1.1pt, mark=diamond, mark options={solid, black}]
  table[row sep=crcr]{%
  1.0000  0.2319\\ 
 5.3392  0.0880\\ 
 8.9850  0.0240\\ 
 12.3958  0.0178\\ 
 15.7407  0.0141\\ 
 19.3982  0.0088\\ 
 23.0726  0.0055\\ 
 26.7807  0.0040\\ 
 30.1795  0.0020\\ 
 33.5743  0.0007\\ 
 36.9935  0.0000\\ 
};
\addlegendentry{HOOI}

\end{axis}

\end{tikzpicture}
  }
  \subfloat{%
  \resizebox{.32\textwidth}{!}{
%
%
\begin{tikzpicture}

\begin{axis}[%
scale = 0.55,
width=4.521in,
height=3.566in,
at={(0.758in,0.481in)},
scale only axis,
xmode=log,
xmin=1,
xmax=30,
xminorticks=true,
xlabel style={font=\color{white!15!black}},
xlabel={time(s)},
ymode=log,
ymin=0.001,
ymax=1,
yminorticks=true,
ylabel style={font=\color{white!15!black}},
ylabel={$\text{relErr - relErr}_{\text{min}}$},
axis background/.style={fill=white},
xmajorgrids,
ymajorgrids,
legend style={legend cell align=left, align=left, draw=white!15!black}
]
\addplot [color=black, dotted, line width=1.1pt, mark=triangle, mark options={solid, black}]
  table[row sep=crcr]{%
  1.0000  0.2301\\ 
 1.4108  0.2301\\ 
 1.4882  0.0240\\ 
 1.5370  0.0014\\ 
 1.5796  0.0010\\ 
};
\addlegendentry{D-Tucker}

\addplot [color=black, dashed, line width=1.1pt, mark=o, mark options={solid, black}]
  table[row sep=crcr]{%
  1.0000  0.2301\\ 
 1.1782  0.1034\\ 
 1.3573  0.0249\\ 
 1.5629  0.0201\\ 
 1.7487  0.0171\\ 
 1.9274  0.0136\\ 
 2.1097  0.0108\\ 
 2.2957  0.0081\\ 
 2.4735  0.0049\\ 
 2.6565  0.0023\\ 
 2.8545  0.0008\\ 
 3.0442  0.0000\\ 
};
\addlegendentry{RPCD}

\addplot [color=black, solid, line width=1.1pt, mark=square, mark options={solid, black}]
  table[row sep=crcr]{%
  1.0000  0.2301\\ 
 1.2087  0.0986\\ 
 1.4053  0.0204\\ 
 1.6052  0.0163\\ 
 1.8203  0.0123\\ 
 2.0469  0.0082\\ 
 2.2596  0.0048\\ 
 2.4611  0.0026\\ 
 2.6485  0.0011\\ 
 2.8455  0.0002\\ 
};
\addlegendentry{RPCD+}

\addplot [color=black, loosely dotted, line width=1.1pt, mark=diamond, mark options={solid, black}]
  table[row sep=crcr]{%
  1.0000  0.2301\\ 
 4.9042  0.0733\\ 
 8.4181  0.0178\\ 
 11.9326  0.0125\\ 
 15.7991  0.0078\\ 
 19.6665  0.0042\\ 
 23.1899  0.0020\\ 
 26.6999  0.0007\\ 
 30.2090  0.0000\\ 
};
\addlegendentry{HOOI}

\end{axis}

\end{tikzpicture}
  }
  \caption{Convergence behavior for decomposing the Brainq dataset using different random initializations.}
\end{figure}
 
For all datasets except Brainq, we observe almost identical convergence behavior when we start at different starting points. The effect of different initialization on the performance of different methods for Brainq can be seen in \cref{fig:init}.

\section{Conclusion}
In this paper, we introduced RPCD and its improved version RPCD+, first-order method solving the Tucker decomposition problem for  high-order, -dimensional dense tensors with the Riemannian coordinate descent method. 
For these methods, we constructed a Riemannian metric by incorporating the second order information of the reformulated cost function and the constraint. 
We proved a convergence rate for general tangent subspace descent on Riemannian manifolds, which for the special case of product manifolds like the Tucker decomposition matches the rate in the Euclidean setting. 
Experimental results showed that RPCD+ as a general method has competitive performance among competing methods for high-order, high-dimensional tensors. 

For a future work, it would be interesting to examine the RPCD method in solving tensor completion problems. Another interesting line of work would be to incorporate latent tensors between the original tensor $\mathcal{X}$ and the projected tensor $\mathcal{Y}$ for further reducing computation costs.

\section*{Acknowledgments}
The authors would like to express their  gratitude for the exceptional careful comments of the anonymous reviewers in all rounds of the review process that helped improving the quality of the paper.

\bibliographystyle{siamplain}
\bibliography{references}

\begin{thebibliography}{10}

\bibitem{absil2009optimization}
{\sc P.-A. Absil, R.~Mahony, and R.~Sepulchre}, {\em Optimization algorithms on
  matrix manifolds}, Princeton University Press, Princeton, NJ, United States,
  2009.

\bibitem{austin2016parallel}
{\sc W.~Austin, G.~Ballard, and T.~G. Kolda}, {\em Parallel tensor compression
  for large-scale scientific data}, in IEEE International parallel and
  distributed processing symposium (IPDPS), 2016, pp.~912--922.

\bibitem{beck2013convergence}
{\sc A.~Beck and L.~Tetruashvili}, {\em On the convergence of block coordinate
  descent type methods}, SIAM Journal on Optimization, 23 (2013),
  pp.~2037--2060.

\bibitem{belhumeur1997eigenfaces}
{\sc P.~N. Belhumeur, J.~P. Hespanha, and D.~J. Kriegman}, {\em Eigenfaces vs.
  {Fisherfaces}: Recognition using class specific linear projection}, IEEE
  Transactions on Pattern Analysis and Machine Intelligence, 19 (1997),
  pp.~711--720.

\bibitem{boumal2020introduction}
{\sc N.~Boumal}, {\em An introduction to optimization on smooth manifolds},
  Cambridge University Press, Cambridge, United Kingdom, 2023.

\bibitem{boumal2019global}
{\sc N.~Boumal, P.-A. Absil, and C.~Cartis}, {\em Global rates of convergence
  for nonconvex optimization on manifolds}, IMA Journal of Numerical Analysis,
  39 (2019), pp.~1--33.

\bibitem{carlini2011ranks}
{\sc E.~Carlini and J.~Kleppe}, {\em Ranks derived from multilinear maps},
  Journal of Pure and Applied Algebra, 215 (2011), pp.~1999--2004.

\bibitem{che2019randomized}
{\sc M.~Che and Y.~Wei}, {\em Randomized algorithms for the approximations of
  {Tucker} and the tensor train decompositions}, Advances in Computational
  Mathematics, 45 (2019), pp.~395--428.

\bibitem{cichocki2009nonnegative}
{\sc A.~Cichocki, R.~Zdunek, A.~H. Phan, and S.-i. Amari}, {\em Nonnegative
  matrix and tensor factorizations: applications to exploratory multi-way data
  analysis and blind source separation}, John Wiley \& Sons, West Sussex,
  United Kingdom, 2009.

\bibitem{de2000multilinear}
{\sc L.~De~Lathauwer, B.~De~Moor, and J.~Vandewalle}, {\em A multilinear
  singular value decomposition}, SIAM Journal on Matrix Analysis and
  Applications, 21 (2000), pp.~1253--1278.

\bibitem{de2000best}
{\sc L.~De~Lathauwer, B.~De~Moor, and J.~Vandewalle}, {\em On the best rank-$1$
  and rank-$(r_1, r_2,..., r_n)$ approximation of higher-order tensors}, SIAM
  journal on Matrix Analysis and Applications, 21 (2000), pp.~1324--1342.

\bibitem{elden2009newton}
{\sc L.~Eld{\'e}n and B.~Savas}, {\em A {Newton-Grassmann} method for computing
  the best multilinear rank-$(r_1, r_2, r_3)$ approximation of a tensor}, SIAM
  Journal on Matrix Analysis and applications, 31 (2009), pp.~248--271.

\bibitem{foster2006frequency}
{\sc D.~H. Foster, K.~Amano, S.~M. Nascimento, and M.~J. Foster}, {\em
  Frequency of metamerism in natural scenes}, Journal of the Optical Society of
  America A, 23 (2006), pp.~2359--2372.

\bibitem{golub1996matrix}
{\sc G.~H. Golub and C.~F. Van~Loan}, {\em Matrix computations}, Johns Hopkins
  University Press, Baltimore, MD, United States, 1996.

\bibitem{grasedyck2010hierarchical}
{\sc L.~Grasedyck}, {\em Hierarchical singular value decomposition of tensors},
  SIAM Journal on Matrix Analysis and Applications, 31 (2010), pp.~2029--2054.

\bibitem{gutman2022coordinate}
{\sc D.~H. Gutman and N.~Ho-Nguyen}, {\em Coordinate descent without
  coordinates: Tangent subspace descent on {Riemannian} manifolds}, Mathematics
  of Operations Research,  (2022).

\bibitem{ishteva2011best}
{\sc M.~Ishteva, P.-A. Absil, S.~Van~Huffel, and L.~De~Lathauwer}, {\em Best
  low multilinear rank approximation of higher-order tensors, based on the
  {Riemannian} trust-region scheme}, SIAM Journal on Matrix Analysis and
  Applications, 32 (2011), pp.~115--135.

\bibitem{jang2020d}
{\sc J.-G. Jang and U.~Kang}, {\em {D-Tucker}: Fast and memory-efficient tucker
  decomposition for dense tensors}, in IEEE International Conference on Data
  Engineering (ICDE), 2020, pp.~1850--1853.

\bibitem{kasai2016low}
{\sc H.~Kasai and B.~Mishra}, {\em Low-rank tensor completion: a {Riemannian}
  manifold preconditioning approach}, in International Conference on Machine
  Learning (ICML), 2016, pp.~1012--1021.

\bibitem{kolda2009tensor}
{\sc T.~G. Kolda and B.~W. Bader}, {\em Tensor decompositions and
  applications}, SIAM Review, 51 (2009), pp.~455--500.

\bibitem{koldatoolbox}
{\sc T.~G. Kolda and B.~W. Bader}, {\em Tensor toolbox for {MATLAB}, version
  3.2.1}, 2021, \url{https://www.tensortoolbox.org}.

\bibitem{kressner2014low}
{\sc D.~Kressner, M.~Steinlechner, and B.~Vandereycken}, {\em Low-rank tensor
  completion by {Riemannian} optimization}, BIT Numerical Mathematics, 54
  (2014), pp.~447--468.

\bibitem{lu_plataniotis_venetsanopoulos_2008}
{\sc H.~Lu, K.~Plataniotis, and A.~Venetsanopoulos}, {\em {MPCA}: Multilinear
  principal component analysis of tensor objects}, IEEE Transactions on Neural
  Networks, 19 (2008), pp.~18--39.

\bibitem{mishra2016riemannian}
{\sc B.~Mishra and R.~Sepulchre}, {\em {Riemannian} preconditioning}, SIAM
  Journal on Optimization, 26 (2016), pp.~635--660.

\bibitem{mitchell2008predicting}
{\sc T.~M. Mitchell, S.~V. Shinkareva, A.~Carlson, K.-M. Chang, V.~L. Malave,
  R.~A. Mason, and M.~A. Just}, {\em Predicting human brain activity associated
  with the meanings of nouns}, Science, 320 (2008), pp.~1191--1195.

\bibitem{morup2008algorithms}
{\sc M.~M{\o}rup, L.~K. Hansen, and S.~M. Arnfred}, {\em Algorithms for sparse
  nonnegative {Tucker} decompositions}, Neural Computation, 20 (2008),
  pp.~2112--2131.

\bibitem{nene1996columbia}
{\sc S.~A. Nene, S.~K. Nayar, and H.~Murase}, {\em Columbia object image
  library ({COIL}-100)}, Tech. Report CUCS-006-96, Department of Computer
  Science, Columbia University, 1996.

\bibitem{oh2017s}
{\sc J.~Oh, K.~Shin, E.~E. Papalexakis, C.~Faloutsos, and H.~Yu}, {\em S-hot:
  Scalable high-order {Tucker} decomposition}, in ACM International Conference
  on Web Search and Data Mining (WSDM), 2017, pp.~761--770.

\bibitem{savas2010quasi}
{\sc B.~Savas and L.-H. Lim}, {\em Quasi-{Newton} methods on {Grassmannians}
  and multilinear approximations of tensors}, SIAM Journal on Scientific
  Computing, 32 (2010), pp.~3352--3393.

\bibitem{sidiropoulos2017tensor}
{\sc N.~D. Sidiropoulos, L.~De~Lathauwer, X.~Fu, K.~Huang, E.~E. Papalexakis,
  and C.~Faloutsos}, {\em Tensor decomposition for signal processing and
  machine learning}, IEEE Transactions on Signal Processing, 65 (2017),
  pp.~3551--3582.

\bibitem{sun2020low}
{\sc Y.~Sun, Y.~Guo, C.~Luo, J.~Tropp, and M.~Udell}, {\em Low-rank {Tucker}
  approximation of a tensor from streaming data}, SIAM Journal on Mathematics
  of Data Science, 2 (2020), pp.~1123--1150.

\bibitem{tappenden2016inexact}
{\sc R.~Tappenden, P.~Richt{\'a}rik, and J.~Gondzio}, {\em Inexact coordinate
  descent: complexity and preconditioning}, Journal of Optimization Theory and
  Applications, 170 (2016), pp.~144--176.

\bibitem{tucker1966some}
{\sc L.~R. Tucker}, {\em Some mathematical notes on three-mode factor
  analysis}, Psychometrika, 31 (1966), pp.~279--311.

\bibitem{vannieuwenhoven2012new}
{\sc N.~Vannieuwenhoven, R.~Vandebril, and K.~Meerbergen}, {\em A new
  truncation strategy for the higher-order singular value decomposition}, SIAM
  Journal on Scientific Computing, 34 (2012), pp.~A1027--A1052.

\bibitem{wright2015coordinate}
{\sc S.~J. Wright}, {\em Coordinate descent algorithms}, Mathematical
  Programming, 151 (2015), pp.~3--34.

\bibitem{xu2018convergence}
{\sc Y.~Xu}, {\em On the convergence of higher-order orthogonal iteration},
  Linear and Multilinear Algebra, 66 (2018), pp.~2247--2265.

\bibitem{zare2018extension}
{\sc A.~Zare, A.~Ozdemir, M.~A. Iwen, and S.~Aviyente}, {\em Extension of {PCA}
  to higher order data structures: An introduction to tensors, tensor
  decompositions, and tensor {PCA}}, Proceedings of the IEEE, 106 (2018),
  pp.~1341--1358.

\end{thebibliography}

\end{document}